\theoremstyle{plain}
\newtheorem{theorem}{Theorem}[section]
\newtheorem{lemma}[theorem]{Lemma}
\newtheorem{claim}[theorem]{Claim}
\newtheorem{proposition}[theorem]{Proposition}
\newtheorem{conjecture}[theorem]{Conjecture}
\newtheorem{question}[theorem]{Question}
\newtheorem{observation}[theorem]{Observation}
\newtheorem{problem}[theorem]{Problem}
\renewcommand{\d}{\underline{d}}
\DeclarePairedDelimiter\near{\lfloor}{\rceil}
\DeclarePairedDelimiter\floor{\lfloor}{\rfloor}
\DeclarePairedDelimiter\ceil{\lceil}{\rceil}
\newcommand{\setbuilder}[2]{\left\{#1\ \colon #2\right\}}
\title{Monochromatic configurations on a circle}
\author{Gábor Damásdi}
\address{(GD) Alfréd Rényi Institute of Mathematics and ELTE Eötvös Loránd University, Budapest, Hungary. Partially supported by ERC Advanced Grant GeoScape 882971.}
\email{damasdigabor@caesar.elte.hu}
\author{Nóra Frankl}
\address{(NF) School of Mathematics and Statistics, The Open University, Milton Keynes UK, During this research was partially supported by ERC Advanced Grant GeoScape 882971.}
\email{nora.frankl@open.ac.uk}
\author{János Pach}
\address{(JP) Alfréd Rényi Institute of Mathematics, Budapest, Hungary and EPFL, Lausanne, Switzerland. Partially supported by ERC Advanced Grant GeoScape 882971, NKFIH (National Research, Development and Innovation Office) grant K-131529, and NSF grant DMS-1928930, while the author was in residence at SLMath Berkeley during the Spring 2025 semester.}
\email{pach@cims.nyu.edu}
\author{Dömötör Pálvölgyi}
\address{(DP) ELTE Eötvös Loránd University and Alfréd Rényi Institute of Mathematics, Budapest, Hungary. 	Partially supported by the ERC Advanced Grant ERMiD 101054936, by the J\'anos Bolyai Research Scholarship of the Hungarian Academy of Sciences, by the EXCELLENCE-24 grant no.~151504 Combinatorics and Geometry, by the New National Excellence Program \'UNKP-22-5 and by the Thematic Excellence Program TKP2021-NKTA-62 of the National Research, Development and Innovation Office.}
\email{dom@cs.elte.hu}
\begin{document}

\maketitle

\begin{abstract}
If we two-colour a circle, we can always find an inscribed triangle with angles $(\frac{\pi}{7},\frac{2\pi}{7},\frac{4\pi}{7})$ whose three vertices have the same colour.
In fact, Bialostocki and Nielsen showed that it is enough to consider the colours on the vertices of an inscribed heptagon.
We prove that for every other triangle $T$ there is a two-colouring of the circle without any monochromatic copy of $T$.

More generally, for $k\geq 3$, call a $k$-tuple $(d_1,d_2,\dots,d_k)$ with $d_1\geq d_2\geq \dots \geq d_k>0$ and $\sum_{i=1}^k d_i=1$ a \emph{Ramsey $k$-tuple} if the following is true: in every two-colouring of the circle of unit perimeter, there is a monochromatic $k$-tuple of points in which the distances of cyclically consecutive points, measured along the arcs, are $d_1,d_2,\dots,d_k$ in some order. By a conjecture of Stromquist, if $d_i=\frac{2^{k-i}}{2^k-1}$, then $(d_1,\dots,d_k)$ is Ramsey. 

Our main result is a proof of the converse of this conjecture. That is, we show that if $(d_1,\dots,d_k)$ is Ramsey, then $d_i=\frac{2^{k-i}}{2^k-1}$. We do this by finding connections of the problem to certain questions from number theory about partitioning $\mathbb{N}$ into so-called \emph{Beatty sequences}.
We also disprove a majority version of Stromquist's conjecture, study a robust version, and discuss a discrete version.
\end{abstract}

\section{Introduction}

 In the May 2021 issue of the \emph{American Mathematical Monthly}, Robert Tauraso posed the following %[Problem 12251, page 467]
problem~\cite{Ta21}: \emph{If all the points of the plane are arbitrarily coloured blue or red, find a convex pentagon with all vertices the same colour and with prescribed area $1$.} A beautiful solution was suggested by Walter Stromquist, which reduced the question to a Ramsey-type problem, interesting in its own right.

Consider $31$ points evenly spaced on a circle, and colour each of them arbitrarily blue or red. Then we can always find $5$ points with the same colour that divide the circle into arcs proportional to $1:2:4:8:16$. (The arcs need not be in the order suggested by the proportion. That is, $1:4:8:2:16$ counts as a success, see Figure \ref{fig:example}) Notice that no matter in what order $5$ points divide the circle into such arcs, their convex hull is a pentagon of the same area. Thus, all we have to do is to start with a circle for which this area is $1$. Stromquist managed to verify the above statement by computer, and he formulated the following attractive conjecture \cite{STR}.

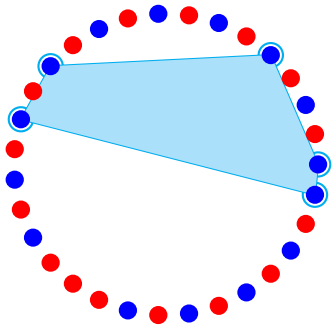
\begin{figure}[!h]
    \centering
    \begin{tikzpicture}[scale=0.8]
	%Start with this to make it go in background
	% Highlight five dots with circles

    \foreach \dotindex in {30,31,4,12,14} {
        \draw[thick, cyan] (\dotindex*360/31:2.5) circle (0.20);}
    \draw[thick, cyan] (30*360/31:2.5) -- (31*360/31:2.5) -- (4*360/31:2.5) -- (12*360/31:2.5) -- (14*360/31:2.5) -- cycle;
    \fill[cyan!30, fill opacity=0.7] (30*360/31:2.5) -- (31*360/31:2.5) -- (4*360/31:2.5) -- (12*360/31:2.5) -- (14*360/31:2.5) -- cycle;

	% Draw 31 dots in a circle
	\foreach \i in {1,...,31} {
		\pgfmathparse{(\i-1)*360/31}
		\ifnum\i=1
		\xdef\dotcolor{blue}
		\else
		\ifnum\i=21
		\xdef\dotcolor{red}
		\else
		\ifodd\i
		\xdef\dotcolor{blue}
		\else
		\xdef\dotcolor{red}
		\fi
		\fi
		\fi
		\fill[\dotcolor] (\pgfmathresult:2.5) circle (0.15);
	}	
\end{tikzpicture}

    \caption{A set of 5 blue points dividing the circle into arcs proportional to $1:2:4:8:16$.}
    \label{fig:example}
\end{figure}

\begin{conjecture}[Stromquist's conjecture]\label{conj:stromquist}
For any $k\ge 3$, consider $2^k-1$ points evenly spaced on a circle, and colour each of them arbitrarily blue or red. 

Then we can always find $k$ points with the same colour that divide the circle into arcs proportional to $1:2:4:\ldots:2^{k-1}$, but not necessarily in this order.
\end{conjecture}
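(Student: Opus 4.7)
The plan is to view the $2^k-1$ evenly spaced points as the cyclic group $\mathbb{Z}/(2^k-1)\mathbb{Z}$, so that a two-colouring corresponds to a partition $\mathbb{Z}/(2^k-1)\mathbb{Z}=R\sqcup B$ and a good $k$-tuple becomes a subset $\{a_0,\ldots,a_{k-1}\}$ whose cyclic gaps form a permutation of $(2^0, 2^1, \ldots, 2^{k-1})$. A key observation is that $\{1,2,4,\ldots,2^{k-1}\}$ is the orbit of $1$ under the doubling map $x\mapsto 2x$, since $2^k\equiv 1\pmod{2^k-1}$; consequently multiplication by $2$ sends good $k$-tuples to good $k$-tuples, merely cyclically shifting the permutation of gaps. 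I would try to exploit this algebraic symmetry throughout.

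After verifying the base case $k=3$ directly in $\mathbb{Z}/7\mathbb{Z}$ (recovering the Bialostocki--Nielsen result), I would count the total number of good $k$-tuples: a starting position in $\mathbb{Z}/(2^k-1)\mathbb{Z}$ together with a cyclic arrangement of the $k$ distinct gaps gives $(2^k-1)(k-1)!$ in total. Under a random colouring each is monochromatic with probability $2^{1-k}$, so the expected count grows rapidly with $k$, leaving a lot of room over zero. The natural next step is a character-sum or Fourier computation on $\mathbb{Z}/(2^k-1)\mathbb{Z}$ to turn this heuristic into a deterministic lower bound. The doubling symmetry further suggests averaging any hypothetical counterexample colouring over its $\langle 2\rangle$-translates and reducing to colourings that are constant on each orbit of $\langle 2\rangle$ acting on $\mathbb{Z}/(2^k-1)\mathbb{Z}$; since the orbit sizes divide $k$, this cuts the colouring space down dramatically and may allow a finite combinatorial analysis.

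A complementary line is induction via the identity $2^k-1 = 2(2^{k-1}-1)+1$. One can try to peel off the longest arc (of length $2^{k-1}$), reducing the problem to finding $k-1$ monochromatic points with gaps $(1,2,\ldots,2^{k-2})$ in the remaining half-circle, provided the endpoints of the long arc share a colour. Balancing the two possibilities for where this long arc lies relative to a dominant colour class, one might hope to cover every bad case simultaneously and close the induction.

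I expect the main obstacle to be the following. Although the expected number of monochromatic good $k$-tuples is large, an adversarial colouring can in principle suppress it dramatically, and pinning down why it cannot drop all the way to zero appears to require delicate use of the specific arithmetic of the powers of $2$ modulo $2^k-1$. A telling rigidity is that every non-empty proper sub-sum of $\{1,2,\ldots,2^{k-1}\}$ is strictly between $0$ and $2^k-1$, so good $k$-tuples are in bijection with ordered partitions of the circle into distinct powers of $2$; converting this structural feature into a quantitative lower bound on monochromatic counts—whether through Fourier analysis, an induction, or a carefully chosen deterministic peeling argument—is where I anticipate the real work to lie, and indeed the fact that Stromquist had to resort to a computer check for $k=5$ suggests that no easy uniform argument is within reach.
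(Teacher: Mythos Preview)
The statement you are trying to prove is an \emph{open conjecture}; the paper does not prove it in general. The only ``proof'' the paper offers is Lemma~\ref{lemma:small_cases}, which confirms the conjecture for $k\le 7$ by encoding the problem as a SAT instance and running Glucose3 (the $k=7$ case took about an hour; $k=8$ was aborted after $\sim$100 days). Thus there is no paper-proof to compare your proposal against.

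Your proposal is a collection of plausible heuristics, but none of them yields an actual argument. The expected-count computation only shows the first moment is large, which says nothing about the minimum over colourings; turning this into a deterministic bound via characters would require controlling correlations of the colouring with \emph{all} non-trivial characters of $\mathbb{Z}/(2^k-1)\mathbb{Z}$, and there is no reason an adversarial colouring cannot align with a bad character. The suggestion to average a counterexample over $\langle 2\rangle$-translates is not well-defined: multiplication by $2$ permutes good $k$-tuples but does not act on colourings in a way that preserves the monochromatic-free property, so you cannot simply reduce to colourings constant on doubling orbits. The inductive ``peel off the longest arc'' idea also breaks down, since after removing an arc of length $2^{k-1}$ you are left with $2^{k-1}-1$ points on an arc (not a circle), and the $(k-1)$-case concerns colourings of a full circle of $2^{k-1}-1$ points; the boundary conditions do not match. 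You yourself note in the final paragraph that no easy uniform argument seems within reach, and that is exactly the state of affairs: the conjecture remains open for $k\ge 8$.
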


The case $k=3$ was settled a long time ago by Bialostocki and Nielsen \cite{BIAL}, and it is not hard to verify the case $k=4$ either. Stromquist kindly informed us that he was able to give a computer assisted proof for $k\le 6$.

In the present note, we study Stromquist's conjecture. To simplify the presentation, we introduce some notation.
%.........
For $k\geq 3$, let $\underline{d}=(d_1,d_2,\dots,d_k)$ be a $k$-tuple with $d_1\geq d_2\geq \dots \geq d_k>0$ and $\sum_{i=1}^k d_i=1$. In a two-colouring of the circle $S$ of unit perimeter, we call a $k$-tuple $(p_1,p_2,\dots,p_k)$ of points from $S$ \emph{monochromatic} if the colour of every point $p_i$ is the same. The main problem we study is whether for a given $\underline{d}$ it is true that in every two-colouring of $S$ we can find a monochromatic $k$-tuple in which the distances of consecutive points, measured along the arcs, are exactly $d_1, \dots, d_k$ in some order. We call a $k$-tuple $\underline{d}$ with this property a \emph{Ramsey $k$-tuple}, or simply \emph{Ramsey}.

A \emph{permuted copy} of a $k$-gon inscribed in $S$ is another $k$-gon inscribed in $S$ with the same side lengths, but in a possibly different order. If the side lengths of the $k$-gon, measured along the arcs, are $d_1,\dots,d_k$, we also call a monochromatic permuted copy of the $k$-gon a \emph{monochromatic permuted copy}, or simply a \emph{monochromatic copy}, of the $k$-tuple $\underline{d}=(d_1,d_2,\dots,d_k)$.

%A permuted copy of a triangle inscribed in $S$ is a congruent (but possibly reflected) monochromatic copy of it. Thus, for $k=3$ the problem asks if in every two-colouring of $S$ we can find a monochromatic congruent copy of a given triangle inscribed in $S$.

%For $k\ge 4$, the problem asks to decide for a given $k$-gon inscribed in $S$, whether we can find a permuted monochromatic copy of it in any two-colourings of $S$.

%Note that the problem is only non-trivial if every $d_i$ is rational. Indeed, it is easy to colour the points of $S$ with no monochromatic pair of points at a given irrational distance apart. If every $d_i$ is rational, then writing $d_i=\frac{p_i}{q_i}$, for $N=\textrm{lcm}(q_1,\dots,q_k)$ the problem is equivalent to deciding if in any two-colouring of the vertices of a regular $N$-gon inscribed in $S$, we can find a monochromatic copy of $\underline{d}$. In other words, the problem is equivalent to deciding if in every two-colouring of $\mathbb Z_N$ we can find a monochromatic $k$-tuple in which the differences of cyclically consecutive elements are $N\cdot d_1,\dots,N\cdot d_k$ in some order.

Using this terminology, Stromquist's conjecture is equivalent to that if $k\geq 3$, and $d_i=\frac{2^{k-i}}{2^{k}-1}$ for every $1\leq i \leq k$, then $\underline{d}=(d_1,\dots,d_k)$ is Ramsey. Our main result is proving the converse of the conjecture. That is, we  prove that other $k$ tuples are \emph{not} Ramsey.

%\begin{conjecture}[JKW's conjecture] A $k$-tuple $\underline{d}=(d_1,\dots,d_k)$  is Ramsey if and only if $d_i=\frac{2^{k-i}}{2^{k}-1}$ for every $1\leq i \leq k$.
%\end{conjecture

\begin{theorem}\label{thm:main}
If $\underline{d}=(d_1,\dots,d_k)$ is Ramsey, then $d_i=\frac{2^{k-i}}{2^{k}-1}$.
\end{theorem}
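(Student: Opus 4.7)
I would prove the contrapositive: given $\underline{d}\neq(2^{k-i}/(2^k-1))_i$, exhibit a two-colouring of $S$ with no monochromatic copy of $\underline{d}$. The easy warm-up is when $d_1\leq 1/2$: cut $S$ into two arcs of length $1/2$; in any monochromatic copy, one gap must cover the complementary arc and so be at least $1/2$, contradicting $d_i\leq d_1\leq 1/2$ (a small perturbation handles the boundary case $d_1=1/2$). So one may assume $d_1>1/2$, the first non-trivial necessary condition for $\underline{d}$ to be Ramsey.

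The main tool I would use is the family of $M$-alternating colourings of $S$: for each integer $M\geq 1$, cut $S$ into $2M$ equal arcs of length $1/(2M)$ and alternate colours. A direct calculation shows that the $M$-alternating colouring avoids monochromatic copies of $\underline{d}$ precisely when, for every permutation $\sigma$ of $\{1,\ldots,k\}$, the $k$ partial-sum residues $\{Ms_i^{\sigma}\bmod 1:1\leq i\leq k\}$ (with $s_i^\sigma=\sum_{j<i}d_{\sigma(j)}$) do not fit inside any half-circle of $S$. Hence the task becomes: for each non-Stromquist $\underline{d}$ with $d_1>1/2$, produce a single integer $M$ witnessing this condition for all $\sigma$ simultaneously.

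This is where the Beatty-sequence framework from the abstract enters. The densities $d_i$ are natural candidates for a partition of $\mathbb{N}$ into $k$ Beatty sequences $B_{1/d_i}=\{\lfloor n/d_i\rfloor:n\geq 1\}$; for $k\geq 3$, classical Fraenkel-type results assert that such a partition with rational distinct densities exists only if $\underline{d}$ is Stromquist's tuple. For non-Stromquist $\underline{d}$ the partition fails, producing integers that are either missed or doubly covered, and this failure can be converted via a modular-arithmetic argument into the desired $M$. I expect the hardest part to be precisely this conversion --- turning a Fraenkel-style non-partition fact into an integer $M$ that simultaneously defeats every permutation $\sigma$. Secondary obstacles are the non-generic cases: tuples with repeated coordinates (where Fraenkel's uniqueness for distinct densities does not directly apply and needs separate treatment), and irrational $\underline{d}$ (handled by approximating with nearby rational non-Stromquist tuples and using continuity of the alternating colouring).
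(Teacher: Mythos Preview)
Your high-level strategy --- use the alternating colourings $c_M$ and connect to Beatty/Fraenkel --- matches the paper's, but two genuine gaps prevent the proposal from going through as written.

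First, what you call ``classical Fraenkel-type results'' is precisely Fraenkel's conjecture, which is \emph{open} for $k\ge 8$. You cannot invoke it. The paper instead proves a new special case (its Theorem~1.5): if the Beatty sequences $\{\lfloor \alpha_i n+\alpha_i/2\rfloor\}_{n\ge 0}$ partition $\mathbb N$ with distinct $\alpha_i$, then $\alpha_i=(2^k-1)/2^{k-i}$. The specific offsets $\beta_i=\alpha_i/2$ are crucial; the proof exploits the resulting symmetry of the associated balanced sequence. Your sequences $B_{1/d_i}=\{\lfloor n/d_i\rfloor\}$ have $\beta_i=0$, and for these even the Stromquist tuple does \emph{not} give a partition, so that formulation cannot be the right bridge.

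Second, the ``conversion'' you flag as hardest --- turning a non-partition into an explicit $M$ that defeats every permutation simultaneously --- is exactly what the paper's key observation makes trivial, and you are missing that observation. The point is that in $c_t$ any monochromatic copy, regardless of the permutation $\sigma$, must jump over each of the $t$ blue intervals exactly once, and an arc of length $d_i$ with both endpoints red jumps over exactly $\lfloor td_i\rceil$ blue intervals. Hence the existence of a monochromatic copy in $c_t$ forces the \emph{permutation-free} identity $\sum_i\lfloor td_i\rceil=t$. This condition, holding for all $t$, is equivalent to the Beatty sequences with $\beta_i=\alpha_i/2$ partitioning $\mathbb N$, so any $t$ at which the partition fails immediately gives a colouring with no monochromatic copy of $\underline d$. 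Your partial-sum/half-circle reformulation is correct but keeps the permutations in play, which is why the conversion looks hard from where you stand. (Incidentally, once this counting argument is in place, repeated $d_i$'s and irrational $d_i$'s fall out automatically --- equal $d_i$'s would give identical Beatty sequences, and Graham's theorem forces rationality --- so the separate treatments you anticipate are unnecessary.)
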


We call the $k$-tuple $\underline{d}=(d_1,\dots,d_k)$ with $d_i=\frac{2^{k-i}}{2^{k}-1}$ the \emph{$(k,2)$-power}. To prove Theorem~\ref{thm:main},  
for every $k$-tuple $\underline{d}$ that is \emph{not} the $(k,2)$-power, we construct a two-colouring of $S$ that does not contain a monochromatic copy of $\underline{d}$.
We call a colouring \emph{uniform} if there is a $t\in \mathbb{Z}^+$, for which the colouring consists of $2t$ arcs of equal length, each containing its clockwise endpoint but not containing its counterclockwise endpoint, coloured alternating red and blue. In fact, we show that for any other tuple $\d$ there exists a uniform colouring that does not contain a monochromatic copy of $\d$. Theorem \ref{thm:main} is an immediate corollary of the following lemma, proved in Section \ref{sec:other_dir}.

\begin{lemma}\label{lemma:uniform} Let $c_t$ be a uniform colouring of $S$ obtained by dividing it into $2t$ equal circular arcs, and colouring them alternating the two colours. If for every $t\in \mathbb{Z}^+$ the uniform colouring $c_t$ contains a monochromatic copy of $\d=(d_1,\dots,d_k)$, then $d_i=\frac{2^{k-i}}{2^{k}-1}$. %$\underline{d}$ is the $(k,2)$-power.
\end{lemma}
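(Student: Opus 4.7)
The plan is to prove the contrapositive: if $\d$ is not the $(k,2)$-power, I exhibit some $t \in \mathbb{Z}^+$ such that $c_t$ contains no monochromatic copy of $\d$. Rescaling $S$ by $2t$ turns $c_t$ into the alternating two-colouring of the unit arcs $[n, n+1)$ on a circle of length $2t$. Writing $2td_i = m_i + f_i$ with $m_i = \lfloor 2td_i\rfloor$ and $f_i = \{2td_i\} \in [0, 1)$, set $\eta_i := m_i \bmod 2$ and $F := 2t - \sum_i m_i = \sum_i f_i \in \{0, 1, \dots, k-1\}$. In any candidate monochromatic configuration with cyclic ordering $\sigma$ and fractional starting position $p \in [0, 1)$, the $i$-th side traverses either $m_i$ or $m_i + 1$ arcs, depending on whether the running fractional position wraps past an integer during that step. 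Since monochromaticity forces every side to cross an even number of arcs, the wrap indicator $\varepsilon_i \in \{0, 1\}$ of the $i$-th side must satisfy $\varepsilon_i = \eta_i$.

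Summing the wraps around the cycle gives $\sum_i \varepsilon_i = F$, so a necessary condition for a monochromatic copy to exist in $c_t$ is $\sum_i \eta_i = F$. A short calculation (using $\lceil \lfloor 2x\rfloor/2\rceil = \lfloor x + \tfrac12\rfloor$, split on whether $\{td_i\} < \tfrac{1}{2}$) rewrites this as the clean identity
\[
\sum_{i=1}^k \left\lfloor td_i + \tfrac{1}{2}\right\rfloor = t,
\]
i.e., the nearest-integer roundings of $(td_1, \dots, td_k)$ sum exactly to $t$. Under the hypothesis of the lemma this identity must hold for every $t \in \mathbb{Z}^+$. Setting $n_i(t) := \lfloor td_i + \tfrac{1}{2}\rfloor$ and $u_i(t) := n_i(t) - n_i(t-1) \in \{0, 1\}$, the identity becomes $\sum_i u_i(t) = 1$ for every $t \geq 1$; equivalently, the sets
\[
T_i \; := \; \{t \geq 1 : u_i(t) = 1\} \; = \; \left\{\left\lceil (n - \tfrac{1}{2})/d_i\right\rceil : n \geq 1\right\}
\]
partition $\mathbb{Z}^+$. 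Each $T_i$ is (essentially) an inhomogeneous Beatty sequence with modulus $\alpha_i := 1/d_i$ and offset $\gamma_i := 1 - \alpha_i/2$, so the lemma reduces to a classification question for Beatty-type partitions of $\mathbb{N}$ with $\sum_i 1/\alpha_i = 1$.

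The final step --- classifying these Beatty partitions --- is the main obstacle, and is precisely where the number-theoretic content promised in the introduction should enter. Fraenkel's original conjecture on homogeneous Beatty partitions (that the only such partition of $\mathbb{N}$ for $k \geq 3$ distinct rational moduli arises from $\alpha_i = (2^k - 1)/2^{k-i}$) is open in general, but the very specific inhomogeneous offsets $\gamma_i = 1 - \alpha_i/2$ together with $\sum_i 1/\alpha_i = 1$ should be rigid enough to admit a self-contained proof. I expect this step to proceed by first ruling out irrational $\d$ via Weyl equidistribution of $(\{td_i\})_{i}$ on the affine slice $\sum_i x_i \in \mathbb{Z}$, then exploiting the periodicity of $(u_i(t))_t$ (with period dividing the common denominator of the $d_i$'s) and an induction on $k$ --- the rigidity of the shift $\gamma_i = 1 - \alpha_i/2$ should then force the doubling structure $\alpha_i = (2^k - 1)/2^{k-i}$, that is, $d_i = 2^{k-i}/(2^k - 1)$.
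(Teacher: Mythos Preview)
Your reduction is correct and coincides with the paper's: from the existence of a monochromatic copy in every $c_t$ you derive $\sum_{i=1}^k \lfloor td_i+\tfrac12\rfloor=t$ for all $t$, and then that the sets $T_i$ form a Beatty-type partition of $\mathbb{Z}^+$ with moduli $\alpha_i=1/d_i$ and offsets $\beta_i=\alpha_i/2$ (up to the shift between $\mathbb{N}$ and $\mathbb{Z}^+$ and the floor/ceiling edge case when $td_i$ is a half-integer). This is exactly what the paper does, only your write-up is more explicit about the parity/wrap bookkeeping.

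The genuine gap is the final classification step, which you yourself flag as ``the main obstacle'' and then only outline in expectation (``I expect this step to proceed by\dots''). That is not a proof, and the paper does not leave it at that level: it isolates precisely this statement as a separate theorem (the special case $\beta_i=\alpha_i/2$ of Fraenkel's conjecture) and proves it. The paper's route is \emph{not} the equidistribution-plus-induction you sketch. Instead it (i) quotes Graham's theorem that any Beatty partition of $\mathbb{N}$ with $k\ge 3$ forces rational $\alpha_i$, giving periodicity of the associated balanced word; (ii) shows, using the specific offsets $\beta_i=\alpha_i/2$, that the period is palindromic, so for each letter $i$ there are two consecutive $i$'s with no larger letter between them; and (iii) invokes Altman--Gaujal--Hordijk's Proposition~2.28 on balanced sequences to conclude the densities are $2^{k-i}/(2^k-1)$. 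A degenerate case where some $\alpha_in+\alpha_i/2$ is an integer is handled separately. None of this is visible in your sketch; in particular, the palindrome argument exploiting the exact offset $\alpha_i/2$ and the external input from \cite{ALT} are the ideas you are missing. Your proposal should either reproduce this argument or supply a genuine alternative rather than an ``expected'' plan.
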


Our proof proceeds by establishing a connection to a conjecture of Fraenkel about \emph{Beatty sequences}, and solving a special case of it, which may be of independent interest. %; see Theorem \ref{thm:fraenkelspec}.
%This can be found in Section \ref{sec:other_dir}.

\smallskip
A {Beatty sequence} is a sequence of the form $\{\floor{\alpha n +\beta }\}_{n=0}^{\infty}$ for some $\alpha,\beta\in \mathbb{R}$. The term {Beatty sequence} was first used by Connell \cite{CON}, after a problem proposed by Beatty \cite{BEA}. Let $\underline{\alpha}=(\alpha_1,\dots,\alpha_k)$ with $0<\alpha_1\leq \dots \leq \alpha_k$ and $\underline{\beta}=(\beta_1,\dots,\beta_k)$ be two $k$-tuples of real numbers. We say that the pair $(\underline{\alpha},\underline{\beta})$ \emph{partitions} $\mathbb{N}$, if the Beatty sequences $\{\floor{\alpha_i n +\beta_i}\}_{n=0}^{\infty}$ partition $\mathbb{N}$. 

Finding a characterisation of those pairs $(\underline{\alpha},\underline{\beta})$ which partition $\mathbb{N}$ is a well-studied problem, which has connections to a combinatorial game, called \emph{Wythoff's game}, see for example \cite{CON, ERD, FRAE2, FRAE, GRA, TIJ2}. For $k=2$, the characterisation is well understood \cite{FRAE, SKO}.
%Graham \cite{GRA} proved that that if $(\underline{\alpha},\underline{\beta})$ partitions $\mathbb{N}$ for $\underline{\alpha}=(\alpha_1,\dots,\alpha_k)$ with $k\geq 3$ and $0<\alpha_1<\alpha_2<\dots <\alpha_k$, then $\alpha_i$ is rational for every $1\leq i \leq k$. 
Fraenkel \cite{FRAE} noted that for $k\geq 3$ and for $\underline{\alpha}=(\alpha_1,\dots,\alpha_k)$ with $\alpha_i=\frac{2^k-1}{2^{k-i}}$ for every $1\leq i \leq k$, there is a $\underline{\beta}$ such that $(\underline{\alpha},\underline{\beta})$ partitions $\mathbb{N}$.
%there are partitions into $m>2$ sequences such that $\alpha_i=\frac{2^m-1}{2^{i-1}}$ for  $1\le i \le m.$
%He also made the following conjecture.
According to Erdős and Graham,\footnote{Fraenkels's conjecture appears at \cite[page 19]{ERD}. However, Fraenkel's paper \cite{FRAE} cited at that place only states a weaker conjecture, asserting that there are $i,j$ with $i\neq j$ such that the ratio $\alpha_i/\alpha_j$ is an integer.} Fraenkel made the following conjecture.

%Fraenkel \cite{FRAE} noted that there are partitions into $m>2$ sequences such that $\alpha_i=\frac{2^m-1}{2^{i-1}}$ for  $1\le i \le m.$  He also made the following conjecture.  

\begin{conjecture}[Fraenkel's conjecture] \label{conj:fraenkel}
For $k\geq 3$ let $\underline{\alpha}=(\alpha_1,\dots,\alpha_k)$ with $0<\alpha_1<\dots < \alpha_k$. If the pair $(\underline{\alpha},\underline{\beta})$ partitions $\mathbb{N}$, then $\alpha_i=\frac{2^k-1}{2^{k-i}} \text{ for } 1\le i \le k.$
\end{conjecture}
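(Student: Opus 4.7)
The strategy is a three-step reduction culminating in a structural uniqueness argument. As a first step I extract the two classical necessary conditions from the partition hypothesis. The Beatty sequence $\{\lfloor \alpha_i n + \beta_i \rfloor\}_{n \ge 0}$ has natural density $1/\alpha_i$, so the partition forces the identity $\sum_{i=1}^k 1/\alpha_i = 1$. I would then argue that for $k \geq 3$ every $\alpha_i$ must be rational: if some $\alpha_j$ were irrational, Weyl equidistribution of $\{\alpha_j n + \beta_j\}$ modulo any integer $M$ forces the $j$-th sequence to hit each residue class mod $M$ with asymptotic frequency $1/(M\alpha_j)$, and combining this with the analogous frequencies for the other sequences and the partition requirement yields a contradiction by a residue-counting argument. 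This reduction is classical in the literature on Beatty partitions.

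As a second step, once all $\alpha_i = p_i/q$ are rational, the partition becomes eventually periodic with period dividing $\mathrm{lcm}(p_i)$. This converts Fraenkel's conjecture into a finite combinatorial question: which increasing tuples of positive rationals admit a periodic Beatty partition of $\mathbb{Z}/P\mathbb{Z}$ for the appropriate $P$? I would translate the Beatty condition into a system of covering constraints on residue classes and aim to enumerate solutions subject to the density identity, the strict ordering, and the combinatorial matching of the offsets $\beta_i$.

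As a third step, I would induct on $k$, trying to show that the smallest $\alpha_1$ is forced to equal $(2^k-1)/2^{k-1}$; peeling off the corresponding sequence and renormalising would then reduce the problem to $k-1$. The density identity and the ordering $\alpha_1 < \cdots < \alpha_k$ squeeze $\alpha_1$ into a narrow interval just above $1$, and I would rule out every candidate except the claimed one by examining a short initial segment of the partition and tracking which small integers each sequence is forced to cover. The decisive obstacle, however, lies precisely here: Fraenkel's conjecture is a famous open problem in general, verified only for small values of $k$ by increasingly intricate case analyses. The naive induction stalls because removing one Beatty sequence leaves not a Beatty partition of a smaller set, but a partition of a structured periodic residue system whose complexity grows with $k$, so the inductive hypothesis cannot be applied directly. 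A genuinely new idea appears to be needed to close this gap, which is plausibly why the authors prove only the special case of Fraenkel's conjecture required for their application to Stromquist's conjecture rather than attempting the full statement.
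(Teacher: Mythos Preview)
The statement you were asked to prove is labelled a \emph{conjecture} in the paper, and the paper does not prove it. Fraenkel's conjecture is open for $k\ge 8$; the authors only establish the special case $\beta_i=\alpha_i/2$ (their Theorem~1.5), which is all they need for Lemma~1.4. You recognise this yourself in your final paragraph, so your submission is less a proof proposal than an outline of why no proof is forthcoming.

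On the substance of your sketch: the first step is essentially correct and matches what the paper uses. The density identity $\sum 1/\alpha_i=1$ is standard, and rationality of the $\alpha_i$ for $k\ge 3$ is Graham's theorem, which the paper cites and invokes in the proof of Theorem~1.5. Your equidistribution argument for rationality is vague but the result itself is known. The second step (passing to a periodic problem) is also routine once rationality is in hand.

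The genuine gap is exactly where you locate it: the inductive peeling in step three does not work. Removing the densest Beatty sequence does not leave a Beatty partition of $\mathbb{N}$, so there is no clean recursion, and the ``short initial segment'' case analysis that rules out all but the intended $\alpha_1$ is precisely the part that has only been carried out for $k\le 7$ in the literature. The paper's actual argument for the $\beta_i=\alpha_i/2$ case avoids this entirely: it translates the Beatty partition into a balanced sequence over a $k$-letter alphabet, shows the period is a palindrome (this is where the specific choice $\beta_i=\alpha_i/2$ is used), and then applies a structural lemma of Altman--Gaujal--Hordijk on balanced sequences to force the $(k,2)$-power densities. That symmetry trick has no analogue for general $\beta_i$, which is why the full conjecture remains open.
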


%Conjecture \ref{conj:fraenkel} is open for any $k\geq 8$, and was confirmed for $k=3$ by Morikawa \cite{MOR} and Tijdeman \cite{TIJ3}, for $k=5,6$ by Tijdeman \cite{TIJ}, for $k=4$ by Altman, Gaujal and Hordijk \cite{ALT} and for $k=7$ by Barát and Varjú \cite{BAR}.

Conjecture \ref{conj:fraenkel} is confirmed for $k\leq 7$ \cite{ALT,BAR,MOR,TIJ,TIJ3}, and is open for $k\geq 8$.
To prove Theorem \ref{thm:main}, we prove Fraenkel's conjecture in a special case.

\begin{theorem}\label{thm:fraenkelspec}For $k\geq 3$, let $\underline{\alpha}=(\alpha_1,\dots,\alpha_k)$, $\underline{\beta}=(\beta_1,\dots,\beta_k)$ with $0<\alpha_1<\dots < \alpha_k$ and $\beta_i=\frac{\alpha_i}{2}$ for every $1\leq i \leq k$. If $(\underline{\alpha},\underline{\beta})$ partitions $\mathbb{N}$, then $\alpha_i=\frac{2^k-1}{2^{k-i}}$ for every  $1\le i \le k.$
\end{theorem}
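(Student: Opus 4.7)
The plan is to translate the Beatty-partition hypothesis into a sharp rounding identity, use it to fix $\alpha_1 \in (1, 2)$, and then induct on $k$ via the Beatty complement of $A_1$. Counting $|A_i \cap \{0, 1, \dots, N-1\}|$, where $A_i := \{\lfloor \alpha_i(n+1/2)\rfloor : n \ge 0\}$, and using $\bigsqcup_i A_i = \mathbb{N}$, gives $\sum_i \lfloor N/\alpha_i + 1/2 \rfloor = N$ for every $N \ge 0$. Sending $N \to \infty$ forces the density $\sum_i 1/\alpha_i = 1$, and subtracting yields the rounding identity
\[
\sum_{i=1}^{k} \langle N/\alpha_i \rangle = 0 \quad \text{for all } N \in \mathbb{Z}, \qquad (\star)
\]
where $\langle x \rangle := x - \lfloor x + 1/2 \rfloor \in [-1/2, 1/2)$ is the signed distance to the nearest integer. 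Evaluating $(\star)$ at $N = 1$ and splitting according to whether $\alpha_i \le 2$ (giving $\langle 1/\alpha_i\rangle = 1/\alpha_i - 1$) or $\alpha_i > 2$ (giving $\langle 1/\alpha_i\rangle = 1/\alpha_i$) forces exactly one index with $\alpha_i \le 2$, which by the ordering is $\alpha_1$. The boundary case $\alpha_1 = 2$ is ruled out because $0 \in A_i$ requires $\alpha_i < 2$, so $\alpha_1 \in (1, 2)$.

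Next I would invoke the two-sequence Beatty--Fraenkel theorem: since the centring ratios $\beta_i/\alpha_i = 1/2$ sum to the integer $1$, the complement of $A_1$ in $\mathbb{N}$ is itself a Beatty sequence of the same form,
\[
\mathbb{N} \setminus A_1 = \{\lfloor \alpha_1^{\ast}(n + 1/2)\rfloor : n \ge 0\}, \qquad \alpha_1^{\ast} := \alpha_1/(\alpha_1 - 1).
\]
Letting $\phi(n) := \lfloor \alpha_1^{\ast}(n + 1/2)\rfloor$ parameterize this complement, the core inductive claim is that for each $j \ge 2$ the pullback $\widetilde{A}_j := \phi^{-1}(A_j)$ is again a Beatty sequence $\{\lfloor \widetilde{\alpha}_j(n+1/2)\rfloor\}$ with $\widetilde{\alpha}_j = \alpha_j/\alpha_1^{\ast}$ and preserved symmetry $\widetilde{\beta}_j = \widetilde{\alpha}_j/2$. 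Granted this, $(\widetilde{\alpha}_j)_{j=2}^{k}$ is a $(k-1)$-variable Beatty partition of $\mathbb{N}$ of the same type, so by the inductive hypothesis $\widetilde{\alpha}_j = (2^{k-1}-1)/2^{k-j}$ for $j \ge 2$; back-substituting $\alpha_j = \alpha_1^{\ast}\widetilde{\alpha}_j$ and using the density $\sum_i 1/\alpha_i = 1$ then forces $\alpha_1 = (2^k - 1)/2^{k-1}$ and hence $\alpha_i = (2^k - 1)/2^{k-i}$, as claimed.

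The main obstacle is establishing the inductive claim, since a composition of floor functions is not in general a Beatty sequence. Concretely, one expands
\[
\phi(\lfloor \widetilde{\alpha}_j(n+1/2)\rfloor) = \lfloor \alpha_j(n+1/2) + \alpha_1^{\ast}(1/2 - \{\widetilde{\alpha}_j(n+1/2)\})\rfloor
\]
and must show that the perturbation $\alpha_1^{\ast}(1/2 - \{\widetilde{\alpha}_j(n+1/2)\})$ never moves the outer floor, so that $\phi^{-1}(A_j)$ acquires the clean form claimed above with the $\beta = \alpha/2$ symmetry intact. This is where the identity $(\star)$ and the special centring $\beta_i = \alpha_i/2$ enter most crucially, and I expect the verification of this compatibility to contain the bulk of the technical work.
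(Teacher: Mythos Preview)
Your approach via induction on $k$ through Beatty complements is genuinely different from the paper's, which instead translates the partition into a balanced sequence over $\{1,\ldots,k\}$, invokes Graham's rationality result to obtain periodicity, shows the period is palindromic from the centring $\beta_i=\alpha_i/2$, and then applies a lemma of Altman--Gaujal--Hordijk on balanced sequences to conclude. That route never peels off a sequence or composes floor functions.

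The gap in your plan is exactly where you locate it, and it is a genuine one: the inductive claim that $\phi^{-1}(A_j)$ is again of the form $\{\lfloor \widetilde\alpha_j(n+\tfrac12)\rfloor\}$ is not a routine verification. The perturbation $\alpha_1^{\ast}\bigl(\tfrac12-\{\widetilde\alpha_j(n+\tfrac12)\}\bigr)$ ranges over an interval of length $\alpha_1^{\ast}>2$, so there is no a~priori reason it cannot move the outer floor; preventing this would require tight joint control of the fractional parts of $\widetilde\alpha_j n$ and $\alpha_j n$ that you have not supplied, and it is not clear that $(\star)$ alone provides it. Establishing this compatibility for an \emph{arbitrary} tuple $(\alpha_i)$ satisfying the hypotheses is essentially the theorem itself. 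There is also a base-case problem: your induction bottoms out at $k-1=2$, but for two sequences the centred partition does \emph{not} force $\alpha_1=3/2$, $\alpha_2=3$; for instance $\alpha_1=5/3$, $\alpha_2=5/2$ (or the irrational pair $\alpha_1=\sqrt{2}$, $\alpha_2=2+\sqrt{2}$) also partitions $\mathbb{N}$ in this form. So the case $k=3$ needs a separate argument you have not given.
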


%We omit the details of the proof of Theorem \ref{thm:fraenkelspec} here, due to space restrictions. 
The proof of Theorem \ref{thm:fraenkelspec} relies on the notion of so-called \emph{balanced sequences}, which are sequences over a finite alphabet such that in any two subsequences of consecutive elements of the same length the number of appearances of any given letter differs by at most one. 

\smallskip
In most of our proofs about Ramsey $k$-tuples, we work with a discrete version of the problem.
We can do so because if there is an $i$ for which $\frac{d_i}{\sum_j d_j}$ is irrational, then it is easy to show that $\underline{d}$ is not Ramsey. Indeed, we can two-colour the points of $S$ with no monochromatic pair of points at a given irrational distance apart. 

%\textbf{Discrete version:} %
If every $d_i$ is rational, then writing $d_i=\frac{p_i}{q_i}$ for every $1\leq i \leq k$, for $N=\textrm{lcm}(q_1,\dots,q_k)$ the problem is equivalent to deciding if in any two-colouring of the vertices of a regular $N$-gon inscribed in $S$, we can find a monochromatic copy of $\underline{d}$. In other words, the problem is equivalent to deciding if in every two-colouring of $\mathbb Z_N$ we can find a monochromatic $k$-tuple in which the differences of cyclically consecutive elements are $N\cdot d_1,\dots,N\cdot d_k$ in some order. %We find connections between certain transformations in the discrete version and avoiding monochromatic copies by using uniform colourings in the original version.

\smallskip
Considering Stromquist's conjecture, we could only confirm it for $k\le 7$ by a computer search, see Section \ref{sect:computer}. That is, we showed that if $k\leq 7$, then in every two-colouring of $S$ there is a monochromatic copy of the $(k,2)$-power.
For general $k$, we could not answer the more specific question whether every uniform two-colouring of $S$ contains a monochromatic copy of the $(k,2)$-power, however, we confirmed this for very large values of $k$ by a computer search. This more specific question is related to another problem from number theory, which has connections to vector balancing and combinatorial discrepancy; see Conjecture \ref{conj:balance}.

%We also consider a number of other variants.
%We discuss these results in Section\ref{sec:uniform_other_dir}.

% \textbf{Majority version:} We  disprove a \emph{majority version} of Stromquist's conjecture. That is, we construct a two-colouring of $S$, in which the denser colour class does not contain any monochromatic copy of the $(k,2)$-power if $k\geq 8$.

\smallskip
In Section \ref{sec:robust} we study what happens when instead of a copy of $\d$, we only want to find a copy $\varepsilon$-close to it. Two $k$-tuples $(p_1,\dots,p_k)$ and $(p_1',\dots,p_k')$ in $S$ are \emph{$\varepsilon$-close} if $|p_1-p_1'|,\dots,|p_k-p_k'|\leq \varepsilon$. A $k$-tuple of points $\underline{p}=(p_1,\dots,p_k)$ in $S$ is an \emph{$\varepsilon$-close copy of $\underline{d}$} if it is $\varepsilon$-close to a copy of $\underline{d}$.
%$\eps$-approximation of it.
We call a $k$-tuple \emph{nearly-Ramsey},
if for every $\varepsilon >0$ in every two-colouring of $S$ there is a monochromatic $\varepsilon$-close copy of $\underline{d}$.

%if for any $\eps>0$ in any two-colouring of $S$ we can find a monochromatic $k$-tuple $\eps$-close to a copy of $\d$.
We show the following.

\begin{theorem}\label{thm:robust}
    If $d_1=\frac12$, or $\d$ is $(\frac{4}{7},\frac{2}{7},\frac{1}{7})$, $(\frac{5}{8},\frac{1}{4},\frac{1}{8})$, $(\frac{3}{4},\frac{1}{6},\frac{1}{12})$, $(\frac{7}{12},\frac{1}{4},\frac{1}{6})$, then $(d_1,d_2,d_3)$ is nearly-Ramsey.
\end{theorem}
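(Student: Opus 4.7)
The plan is to treat each of the five cases separately. The case $\underline{d} = (\tfrac{4}{7}, \tfrac{2}{7}, \tfrac{1}{7})$ is the $(3,2)$-power, which is Ramsey by Bialostocki--Nielsen \cite{BIAL} and hence trivially nearly-Ramsey. The interesting content lies in the other four cases.

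For $d_1 = \tfrac12$, the configuration consists of two antipodal vertices and a third point. Given a two-colouring $c : S \to \{R,B\}$ and $\varepsilon > 0$, I split on whether the set $T = \{x : c(x) = c(x+\tfrac12)\}$ has positive measure. If so, I pick a Lebesgue density point $x_0$ of (say) the red portion of $T$; a Steinhaus-type argument using $m(A_R) \geq \tfrac12$ produces a perturbation $x_0' = x_0 + \delta$ with $|\delta| \leq \varepsilon$ for which $x_0', x_0' + d_2, x_0' + \tfrac12$ all lie in $A_R$. If $T$ has measure zero, the colouring is \emph{antipodally complementary}: $A_B = A_R + \tfrac12$ up to a null set, $m(A_R) = \tfrac12$, and (setting $f = \chi_{A_R}$) $\widehat{f}(n) = 0$ for all nonzero even $n$. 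I then analyse the triple-correlation
\[
F(a,b) \;=\; m\!\bigl(A_R \cap (A_R - a) \cap (A_R - a - b)\bigr),
\]
which is continuous and vanishes on the line $\{a+b = \tfrac12\}$. A Fourier expansion of $F$ in the nonzero odd frequencies of $f$, combined with Parseval's identity $\sum_{m \text{ odd}} |\widehat{f}(m)|^2 = \tfrac14$, shows that $F$ cannot vanish identically on any open neighbourhood of $(d_2, d_3)$. Selecting $(a,b)$ within $\varepsilon$ of $(d_2, d_3)$ with $F(a,b) > 0$ produces the required $\varepsilon$-close red triple.

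For the three remaining specific triples, the entries share small common denominator $N \in \{8, 12\}$. As observed in the excerpt, the strict Ramsey question reduces to $\mathbb{Z}_N$; for the nearly-Ramsey version I pass to the finer grids $\mathbb{Z}_{MN}$ and aim to show that for some sufficiently large $M$, every 2-colouring of $\mathbb{Z}_{MN}$ contains a monochromatic triple whose cyclic gaps match $MN \cdot \underline{d}$ up to a bounded additive error in units of $\tfrac{1}{MN}$. This is proved by a finite case analysis on the coarse structure of the colouring: colourings with a long monochromatic arc admit an adaptation of the $d_1 = \tfrac12$ construction, while highly oscillating colourings are controlled by a direct combinatorial argument that leverages the Beatty-sequence machinery developed earlier in the paper.

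The main obstacle is the antipodally complementary subcase of $d_1 = \tfrac12$: without exact monochromatic antipodal pairs, the near-antipodal pair and the third monochromatic point must be produced together, so the Fourier positivity of $F$ must be combined with a careful Lebesgue-density analysis to satisfy all three arc-length constraints simultaneously. For the three specific rational triples the analytic content is smaller, but the combinatorial case analysis on $\mathbb{Z}_{MN}$ must be organised carefully to remain manageable.
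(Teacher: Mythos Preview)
Your plan has a genuine gap and is far more elaborate than needed.

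\textbf{The measurability gap.} Your entire treatment of the case $d_1=\tfrac12$ assumes the colour classes are Lebesgue measurable: you speak of the measure of $T$, of Lebesgue density points of $A_R\cap T$, of $m(A_R)\ge\tfrac12$, and of Fourier coefficients of $\chi_{A_R}$. The definition of nearly-Ramsey quantifies over \emph{all} two-colourings of $S$, with no regularity hypothesis, and there is no evident reduction from arbitrary colourings to measurable ones here. So as stated the argument does not prove the theorem. Even in the measurable setting, the assertion that the triple correlation $F$ cannot vanish on a neighbourhood of $(d_2,d_3)$ is plausible but not actually established by the sketch you give.

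\textbf{The specific triples.} For $(\tfrac58,\tfrac14,\tfrac18)$, $(\tfrac34,\tfrac16,\tfrac1{12})$, $(\tfrac7{12},\tfrac14,\tfrac16)$ your proposal is only a description of a strategy (``pass to $\mathbb Z_{MN}$, split on long arcs vs.\ oscillation, invoke Beatty-sequence machinery''), not an argument. There is no reason offered why a bounded $M$ suffices, and the Beatty material in the paper concerns uniform colourings, not arbitrary ones, so it is unclear what it would contribute.

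\textbf{What the paper does instead.} The paper's proof avoids all of this with a single elementary device. Declare a point \emph{black} if every neighbourhood of it meets both colour classes; any non-monochromatic colouring has a black point, and any copy of $\underline d$ using only red-or-black (resp.\ blue-or-black) vertices can be perturbed to a nearby genuinely monochromatic copy. For $d_1=\tfrac12$: place a black point $p$, its antipode $p'$, and a second antipodal pair $q,q'$ so that any three of $p,p',q,q'$ form a copy of $(d_1,d_2,d_3)$; by pigeonhole at most one of $p',q,q'$ is (say) blue, and the remaining three give the desired red-or-black copy. The three sporadic triples are handled by the same trick: inscribe a regular $8$- or $12$-gon with one vertex black, and a short forcing chain on the colours of the other vertices produces a monochromatic copy in a few lines each. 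No measure theory, no Fourier analysis, no asymptotic grids.
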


We also conjecture that these are the only nearly-Ramsey triples.

%that in addition to $(\frac{4}{7},\frac{2}{7},\frac{1}{7})$, the triples $(\frac{5}{8},\frac{1}{4},\frac{1}{8})$, $(\frac{3}{4},\frac{1}{6},\frac{1}{12})$, $(\frac{7}{12},\frac{1}{4},\frac{1}{6})$ are also nearly-Ramsey, and also any triple with $d_1=\frac{1}{2}$.

% We show that all triples with $d_1=\frac{1}{2}$ 
% %(which correspond to right-angled triangles), 
% and the triples $(\frac{5}{8},\frac{1}{4},\frac{1}{8})$, $(\frac{3}{4},\frac{1}{6},\frac{1}{12})$, $(\frac{7}{12},\frac{1}{4},\frac{1}{6})$ are nearly-Ramsey. We conjecture that these, together with $(\frac{4}{7},\frac{2}{7},\frac{1}{7})$, are the only nearly-Ramsey triples.
%DOM: Itt nem szabadna k-t írni! Kikommenteltem inkább. We have verified this by computer search up to very large $k$.
%We also explain how this could possibly be proved by a modification of the proof of Theorem \ref{thm:main}.

\section{Ramsey tuples are $(k,2)$-powers} \label{sec:other_dir}
%\section{Proof of Lemma \ref{lemma:uniform}} \label{sec:other_dir}

In this section, we prove Theorem \ref{thm:main}. As a preparation, we start by discussing the connection between Fraenkel's conjecture and balanced sequences. Let $A$ be a finite set and consider a sequence $S=\{s_i\}_{i=0}^{\infty}$ whose elements are from $A$. We say that $S$ is a \emph{balanced sequence over $A$} if for any $a\in A$ the number of $a$'s in any two contiguous subsequences of the same length differs by at most $1$. 

If $a \in A$, we define its indicator sequence $\{\delta^a_i\}_{i=0}^{\infty}$ as $\delta^a_i=1$ if  $s_i = a$  and $\delta^a_i=0$ otherwise. If $S$ is balanced, then for any $a \in A$ the limit $r_a:=\lim \frac{1}{n}\sum\limits_1^n \delta^a_i$ exists. We have $0\leq r_a\leq 1$, and call $r_a$ the \emph{density}\footnote{Note that in \cite{ALT}  the  \emph{rate} of $a$ is defined as $r_a$, while in \cite{BAR} as $1/r_a$.  To avoid confusion, we use the  term `density.'} of $a$.  If $S$ is periodic, then $r_a$ is the proportion of the number of $a$-s in any period. For example, the periodic sequence with period $(a,b,a,c,a,b,a)$ is balanced and the densities are $(\frac{4}{7},\frac{2}{7},\frac{1}{7})$. 

%The following question is well studied, and is related to partitions by Beatty sequences. 

\begin{question} For which sets $R=\{r_1,\dots ,r_k\}$, does there exist a balanced sequence over a $k$ element set, such that the densities of the elements are exactly the elements of $R$? 
\end{question}

Altman, Gaujal and Hordijk \cite{ALT} made the following conjecture. 

%Given a set $(r_1,\dots ,r_m)$, is it possible to construct a balanced sequence on $m$ elements with densities $(r_1,\dots, r_m)?$ \cite{ALT} the authors made the following conjecture. 

\begin{conjecture}\cite[Conjecture 2.25]{ALT}\label{conj:balanced}
For a set of distinct reals $\{r_1,\dots ,r_k\}$ with $k\geq 3$ and $r_1>\dots > r_k>0$ a balanced sequence with densities $r_1,\dots,r_k$ exists if and only if $\underline{r}=(r_1,\dots,r_k)$ is the $(k,2)$-power.
%$$r_i=\frac{2^{i-1}}{2^m-1} \text{ for } 1\le i \le m.$$
\end{conjecture}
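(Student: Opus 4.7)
My plan is to prove Conjecture \ref{conj:balanced} by reducing it to Fraenkel's conjecture (Conjecture \ref{conj:fraenkel}), and—when applicable—to Theorem \ref{thm:fraenkelspec}.

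The ``if'' direction is an explicit construction. Define a word $S_k$ over $\{a_1,\dots,a_k\}$ recursively by $S_1 = a_1$ and $S_j = S_{j-1}\, a_j \, S_{j-1}$, and extend it periodically. An induction gives $|S_k| = 2^k-1$ and that $a_i$ occurs $2^{k-i}$ times per period, yielding exactly the $(k,2)$-power densities. Moreover $S_k$ is a palindrome centred at its unique occurrence of $a_k$; assuming inductively that $S_{k-1}$ is balanced, the balance of $S_k$ follows from a symmetric-split argument on windows crossing (or not crossing) the central $a_k$.

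For the ``only if'' direction, suppose $S$ is a balanced sequence over $\{a_1, \dots, a_k\}$ with distinct densities $r_1 > \dots > r_k > 0$, and set $\alpha_i = 1/r_i$. Each indicator sequence $\delta^{a_i}$ is a balanced $\{0,1\}$-sequence of density $r_i$, so by the Morse--Hedlund characterisation of Sturmian sequences the positions of $1$'s in $\delta^{a_i}$ form a Beatty sequence $B_i = \{\lfloor n\alpha_i + \beta_i \rfloor\}_{n \ge 0}$ for some $\beta_i$. Since every natural number is labelled by exactly one letter of $S$, the sequences $B_1,\dots,B_k$ partition $\mathbb{N}$. Invoking Fraenkel's conjecture then gives $\alpha_i = (2^k-1)/2^{k-i}$, and hence $r_i = 2^{k-i}/(2^k-1)$, the $(k,2)$-power.

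The main obstacle is that Fraenkel's conjecture itself is open for $k \ge 8$. To use the paper's Theorem \ref{thm:fraenkelspec} in place of the full conjecture, one needs the extra input that $S$ can be cyclically shifted so that $\beta_i = \alpha_i/2$ simultaneously for every $i$; a cyclic shift by $c\in\mathbb{Z}$ acts on phases as $\beta_i\mapsto \beta_i - c$, so this requires $\beta_i - \alpha_i/2$ to be independent of $i$. My plan to achieve this is to exploit the palindromic symmetry of balanced sequences: place the origin at a palindrome centre of $S$ and translate the reflection symmetry of $S$ at that centre into the identity $\beta_i \equiv \alpha_i/2 \pmod{\alpha_i}$ for each letter. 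Making this precise, particularly in the aperiodic (Sturmian) regime where a common centre is only a limiting notion and must be aligned for all $k$ letters at once, is the crux of attacking Conjecture \ref{conj:balanced} beyond the range $k \le 7$ already accessible through the known cases of Fraenkel's conjecture.
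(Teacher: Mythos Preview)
The statement you are addressing is stated in the paper as an \emph{open conjecture}, quoted from \cite{ALT}; the paper does not prove it and there is no ``paper's own proof'' to compare against. The paper only uses the far weaker Lemma~\ref{lem:balanced_spec} (also taken from \cite{ALT}), and it explicitly notes that Conjecture~\ref{conj:balanced} implies Fraenkel's conjecture, which is itself open for $k\ge 8$. So at the outset you are proposing to settle a problem the paper regards as unresolved.

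Your ``only if'' reduction contains a concrete error. The Morse--Hedlund theorem characterises \emph{aperiodic} balanced binary words as Sturmian; it does not say that every balanced binary word has its $1$'s at the positions of a Beatty sequence. A counterexample is the ultimately periodic word $1\,0\,0\,1\,0\,1\,0\,1\,0\,1\ldots$ (a single $1$, then $0$, then the periodic tail $0101\ldots$): one checks directly that every pair of equal-length windows differs in at most one $1$, so it is balanced with density $\tfrac12$, yet the positions of the $1$'s are $\{0,3,5,7,9,\ldots\}$, whose gap sequence $3,2,2,2,\ldots$ is eventually constant but not periodic, hence cannot equal $\{\lfloor \alpha n+\beta\rfloor\}_{n\ge0}$ for any $\alpha,\beta$. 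Thus the passage from ``each indicator is balanced'' to ``the letters occupy Beatty sequences partitioning $\mathbb N$'' is not justified, and the implication Fraenkel $\Rightarrow$ Conjecture~\ref{conj:balanced} that your argument would yield is, as the paper's wording ``stronger'' suggests, not known.

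Even setting this aside, the remainder of your plan reduces the problem to Fraenkel's conjecture, which is open, and then to the special case of Theorem~\ref{thm:fraenkelspec} under the additional hypothesis $\beta_i=\alpha_i/2$. You correctly identify that forcing $\beta_i-\alpha_i/2$ to be independent of $i$ by shifting to a common palindromic centre is the crux, but you give no argument that an arbitrary balanced $k$-letter word admits such a centre aligned for all letters simultaneously. This is not a technical detail: if it could be done, it would combine with Theorem~\ref{thm:fraenkelspec} to resolve the conjecture, so it is at least as hard as the conjecture itself.
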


 This conjecture is stronger than Conjecture \ref{conj:fraenkel}. Indeed, assume that the Beatty sequences $\{ \floor{ \alpha_i n +\beta_i} \}_{n=0}^{\infty}$ with $i=1,\dots, k$ partition $\mathbb{N}$, and for each $j\in \mathbb{N}$ let $s_j=i$ if $j\in\{ \floor{ \alpha_i n +\beta_i} \}_{n=0}^{\infty}$. 
 %We define the sequence $S=\{s_i\}_{i=1}^n$ by the following rule: $s_i=j$ if $i\in \{ \floor{ \alpha_j n +\beta_j} \}_{n=1}^{\infty}$. 
 It is straightforward to show that the sequence $S=\{s_j\}_{j=0}^\infty$ is balanced with densities  $\frac{1}{\alpha_1},\dots,\frac{1}{\alpha_k}$, see Figure \ref{fig:btobalanced} for an example. Hence, Conjecture \ref{conj:balanced} implies Conjecture \ref{conj:fraenkel}. 

\begin{figure}[!h]
    \centering
    	\begin{tikzpicture}[scale=0.6]
			% Draw the real line from 0 to 20 in black
			\draw[->] (0,0) -- (26,0); 
			
			% Mark integer points with short vertical bars
			\foreach \i in {0,1,...,26} {
				\draw (\i,0.1) -- (\i,-0.1);
			}
			
			\foreach \i in {0,1,...,14} {
				\pgfmathsetmacro{\xcoord}{7/4 * (\i+1/2)}
				\fill[red] (\xcoord,0) circle (0.1);
				\pgfmathtruncatemacro{\intpart}{\xcoord}
				\node[below] at (\intpart+0.5, -0.3) {\color{red} 1};
			}
			
			\foreach \i in {0,1,2,...,6} {
				\pgfmathsetmacro{\xcoord}{7/2 * (\i+1/2)}
				\fill[blue] (\xcoord,0) circle (0.1);
				\pgfmathtruncatemacro{\intpart}{\xcoord}
					\node[below] at (\intpart+0.5, -0.3) {\color{blue} 2};
			}
			
			\foreach \i in {0,1,2,...,5} {
				\pgfmathsetmacro{\xcoord}{7 * (\i+1/2)}
				\fill[green] (\xcoord,0) circle (0.1);
				\pgfmathtruncatemacro{\intpart}{\xcoord}
				\node[below] at (\intpart+0.5, -0.3) {\color{green} 3};
			}
		\end{tikzpicture}
    \caption{Arithmetic sequences for $\alpha_1=\frac{7}{4}$, $\alpha_2=\frac{7}{2}$, $\alpha_3=\frac{7}{1}$, and the corresponding balanced sequence with densities $\frac{4}{7}$, $\frac{2}{7}$, $\frac{1}{7}$.}
    \label{fig:btobalanced}
\end{figure}
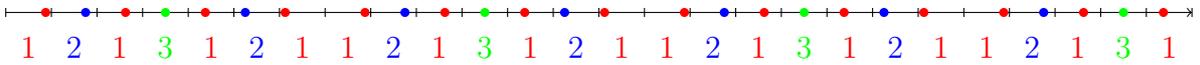

The following special case of Conjecture \ref{conj:balanced} was proven in \cite{ALT}.

\begin{lemma} \cite[Proposition 2.28]{ALT}\label{lem:balanced_spec}
Let $k\geq 3$ and $S$ be a balanced sequence over $\{a_1,\dots, a_k\}$ with  densities $r_1 > \dots > r_k>0$. If for every $1 \le i \le k$ there exist two consecutive  $a_i$ in $S$ with no $a_j$ between them for any $j > i$, then $(r_1,\dots,r_k)$ is the $(k,2)$-power.
%$$r_i=\frac{2^{k-i}}{2^k-1} \text{ for } 1\le i \le m.$$
\end{lemma}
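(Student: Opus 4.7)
My strategy is a strong induction on the alphabet size $k$, with a reduction step based on deleting the rarest letter $a_k$ and exploiting the ``clean block'' condition to control what deletion does.

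\textbf{Base case ($k=3$).} The hypothesis for $i=1$ gives a substring $a_1 a_1$ in $S$, and balancedness then implies that every window of length $2$ contains at least one $a_1$; in particular $r_1 > 1/2$ and no two consecutive non-$a_1$ letters occur. The hypothesis for $i=2$ now produces a substring of the form $a_2 a_1^m a_2$ with $m\ge 1$. Applying balancedness to windows of length $m$ (with forced letter counts $(m,0,0)$) and of length $m+2$ (with forced counts $(m,2,0)$), I extract the strict bounds $m(1-r_1)<1$, $m r_j<1$ for $j\ge 2$, and $(m+2)r_j<1$ for $j\ge 3$, together with $G_2 = m+1 \in \{\lfloor 1/r_2\rfloor, \lceil 1/r_2\rceil\}$. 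A short case analysis forces $m=2$ and then pins down $(r_1,r_2,r_3)=(4/7,\,2/7,\,1/7)$.

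\textbf{Inductive step ($k\ge 4$).} Let $S'$ be the infinite word obtained by deleting every occurrence of $a_k$ from $S$. The technical heart of the argument is the sub-claim that $S'$ is balanced over $\{a_1,\dots,a_{k-1}\}$ with densities $r'_i = r_i/(1-r_k)$. Granting this, the hypothesis transfers automatically: a pair of consecutive $a_i$'s witnessing the hypothesis in $S$ (for $i\le k-1$) already has no $a_k$ between them, so the same pair witnesses the hypothesis for $i$ in $S'$ with respect to the smaller alphabet. By induction $r'_i = 2^{k-1-i}/(2^{k-1}-1)$, so
\[
 r_i \;=\; (1-r_k)\cdot \frac{2^{k-1-i}}{2^{k-1}-1}, \qquad 1\le i\le k-1.
\]
The normalization $\sum_i r_i = 1$ is automatic in this one-parameter family, so to pin down $r_k$ I plan to invoke balancedness of $S$ itself by counting occurrences of $a_{k-1}$ in a window of length $G_k$, the gap between two consecutive $a_k$'s. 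Balancedness says this count is $\lfloor G_k r_{k-1}\rfloor$ or $\lceil G_k r_{k-1}\rceil$, while the structure of $S'$ supplied by the inductive conclusion (together with $G_k \in \{\lfloor 1/r_k\rfloor, \lceil 1/r_k\rceil\}$) fixes the actual integer count; matching the two values forces $1/r_k = 2^k-1$.

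\textbf{Main obstacle.} The delicate point is the sub-claim that $S\mapsto S'$ preserves balancedness. This is false for arbitrary balanced sequences---removing a rare letter can clump the remaining ones---so the argument must crucially exploit the hypothesis of the lemma. The ``clean block'' condition is exactly what forces the $a_k$'s to sit in very regular (Beatty-like) positions relative to the other letters, so that their removal merely rescales the densities rather than introducing new imbalance. Making this rigorous, and then verifying the precise integer matching that pins down $r_k$, are the two steps where the argument is not purely routine; the remainder of the induction is bookkeeping.
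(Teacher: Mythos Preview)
The paper does not prove this lemma; it is quoted verbatim from \cite{ALT} (Proposition~2.28 there) and used as a black box in the proof of Theorem~\ref{thm:fraenkelspec}. So there is no ``paper's own proof'' to compare against.

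That said, your proposal has a genuine gap at exactly the point you flag. Your induction hinges on the claim that deleting every occurrence of $a_k$ from $S$ yields a balanced word $S'$. You correctly observe this is false for general balanced sequences, and you assert that the hypothesis of the lemma rescues it because it ``forces the $a_k$'s to sit in very regular (Beatty-like) positions.'' But the hypothesis only supplies, for each $i$, a \emph{single} pair of consecutive $a_i$'s with no higher-indexed letter between them. That is one local witness per letter, not a global structural constraint on the placement of $a_k$. There is no evident mechanism by which this one window propagates to control all gaps between $a_k$'s, and you do not indicate one. Without that, the reduction to $S'$ collapses.

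There is a second gap in the endgame. After induction gives $r_i=(1-r_k)\,2^{k-1-i}/(2^{k-1}-1)$ for $i\le k-1$, you still need to pin down $r_k$. You propose to compare the number of $a_{k-1}$'s in a gap of length $G_k$ between two $a_k$'s against what ``the structure of $S'$'' predicts. But the inductive conclusion only gives you the \emph{densities} of $S'$, not its word structure; balanced words with the same density vector need not agree on letter counts in specific windows tied to the original $S$. Moreover, the hypothesis for $i=k$ is vacuous (there is no $j>k$), so the only unused piece of information is the $i=k-1$ witness in $S$, and you do not show how to convert that single witness into the equation $r_k=1/(2^k-1)$. As it stands, both the balancedness of $S'$ and the determination of $r_k$ are assertions rather than arguments.
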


Using Lemma \ref{lem:balanced_spec}, we prove Theorem \ref{thm:fraenkelspec}.

\begin{proof}[Proof of Theorem \ref{thm:fraenkelspec}]
Assume that $(\underline{\alpha},\underline{\beta})$ satisfy the assumptions of the theorem. For every $j\in \mathbb{N}$ let $s_j=i$ if $j\in \{ \floor{ \alpha_i n +\frac{\alpha_i}{2}} \}_{n=0}^{\infty}$.
Then the sequence $S=\{s_j\}_{j=0}^{\infty}$ is balanced, and it has densities $\frac{1}{\alpha_1},\dots,\frac{1}{\alpha_k}$. Thus, it is sufficient to show that Lemma \ref{lem:balanced_spec} can be applied for $S$.  That is, we need to show that for each $i$ there exist two consecutive $i$-s in $S$ with no smaller $j$ between them.

Graham \cite{GRA} showed that if $k\ge 3$ and $(\underline{\alpha},\underline{\beta})$ partitions $\mathbb{N}$, then each $\alpha_i$ is rational. Let $p$ be the smallest common multiple of the numerators of $\alpha_1,\dots,\alpha_k$ in their simplified form, and for each $i$ write $\alpha_i=\frac{p}{q_i}$ for some positive integer $q_i$. Then $\floor{ \alpha_i (n+q_i) +\frac{\alpha_i}{2}}=p+\floor{ \alpha_i n +\frac{\alpha_i}{2}}$, thus $S$ is periodic with a period of length $p$. 

We will consider two cases based on whether there exists an $i$ and an $n$ such that $\alpha_i n +\frac{\alpha_i}{2}$ is an integer.

\textbf{Case 1.} First, assume that no such $i$ and $n$ exists. As $\floor{x+1}=\ceil{x}$ holds for any noninteger $x$,  we have $\floor{ \alpha_i n +\frac{\alpha_i}{2}+1}=\ceil{ \alpha_i n +\frac{\alpha_i}{2}}$ for any $i$ and $n$. Using this observation we show that the period $s_0,\ldots, s_{p-1}$ is symmetric.

If for some $m<p$ we have $m=\floor{ \alpha_i n +\frac{\alpha_i}{2}}$, then $p-1-m=\alpha_iq_i-\floor{ \alpha_i n +\frac{\alpha_i}{2}+1}=\alpha_iq_i-\ceil{ \alpha_i n +\frac{\alpha_i}{2}}=\alpha_iq_i+\floor{- \alpha_i n -\frac{\alpha_i}{2}}= \floor{ \alpha_i (q_i-n-1) +\frac{\alpha_i}{2}}$, hence $s_m=i=s_{p-1-m}$. This shows that the period $s_0,\dotsm, s_{p-1}$ is symmetric. 
 
Let $s_{t_i}=i$ be the first appearance of $i$ in $S$. We will show that $s_{p-1-t_i}$ and $s_{p+t_i}$ are two consecutive $i$-s with no $j>i$ between them. The expression $\alpha_i n+\frac{\alpha_i}{2}$ is monotone increasing in $\alpha_i$, hence the first element of $\{ \floor{ \alpha_i n +\frac{\alpha_i}{2}} \}_{n=0}^{\infty}$ is smaller than the first element of $\{ \floor{ \alpha_j n +\frac{\alpha_j}{2}} \}_{n=0}^{\infty} $ for any $i<j$. This  implies that $t_i<t_j$ for $i<j$, that is, no $j$ appears before the first $i$. Since $S$ is periodic and the period is symmetric, we have $s_{p-1-t_i}=i$, $s_{p+t_i}=i$, and between them no $j$ appears for any $i<j$ (see Table \ref{tab:seqs}). Thus, Lemma \ref{lem:balanced_spec} can be applied and the theorem follows.
\begin{table}[!ht]
    \centering
    \begin{tabular}{|c|c|c|c|c|c|c|c|c|c|c|}
        \hline
        $s_0$ & $s_1$ & $\dots$ & $s_{t_i}$ & $\dots$ & $s_{p-1-t_i}$ & $\dots$ & $s_{p-1}$ & $s_{p}$ & $\dots$ & $s_{p+t_{i}}$ \\
        \hline
        \multicolumn{3}{|c|}{$<i$} & $=i$ & $\dots$ & $=i$ & \multicolumn{2}{|c|}{$<i$}  & \multicolumn{2}{|c|}{$<i$} & $=i$  \\
        \hline
        \multicolumn{4}{|c|}{Part 1} &  & \multicolumn{3}{|c|}{Part 2} & \multicolumn{3}{|c|}{Part 3}\\
        \hline
        
    \end{tabular}
    \medskip
    \caption{Part 1 and Part 3 are the same, and Part 2 is also the same in reverse order.}
    \label{tab:seqs}
\end{table}

\textbf{Case 2.} 
Assume that $m=\alpha_{i} n+\frac{\alpha_{i}}{2}$ is an integer for some $i$ and $n$. In this case we will show that $(\underline{\alpha},\underline{\beta})$ cannot partition $\mathbb{N}$ (where $\beta_i=\frac{\alpha_i}{2}$). We start by showing that if and integer $m$ appears in one of the sequences, then so does $m+1$.

Assume that $m=\alpha_{j_0} n_0+\frac{\alpha_{j_0}}{2}$ is an integer. If $(\underline{\alpha},\underline{\beta})$ partitions $\mathbb{N}$, the open interval $(m,m+1)$ contains no $\alpha_in+\frac{\alpha_i}{2}$ value for any $i$ and $n$. Pick an integer $a$ so that $ap>m$. Since $(\underline{\alpha},\underline{\beta})$ partitions $\mathbb{N}$, there must be a unique $j_1$ and $n_1$ such that $\alpha_{j_1}n_1+\frac{\alpha_{j_1}}{2}$ falls into $[ap-1-m,ap-m)$. We claim that $\alpha_{j_1}n_1+\frac{\alpha_{j_1}}{2}=ap-1-m$. Indeed, otherwise $ap-(\alpha_{j_1}n_1+\frac{\alpha_{j_1}}{2})=\alpha_{j_1}(aq_{j_1}-n_1-1)+\frac{\alpha_{j_1}}{2}$ falls into $(m,m+1)$ a contradiction. 

Therefore $m+1=ap-\alpha_{j_1}n_1-\frac{\alpha_{j_1}}{2}=\alpha_{j_1}(aq_{j_1}-n_1-1)+\frac{\alpha_{j_1}}{2}$, that is, whenever $m$ is of the form $\alpha_in+\frac{\alpha_i}{2}$, then $m+1$ also appears in one of the sequences. This immediately implies that each integer bigger that $m$ is of the form $\alpha_in+\frac{\alpha_i}{2}$ for some $i$ and $n$. Using the periodicity of $S$ we also have this for natural numbers smaller than $m$. This is a contradiction, as 0 cannot be of the form $\alpha_in+\frac{\alpha_i}{2}$.
%Hence the $\alpha_in+\frac{\alpha_i}{2}$ values cover $\mathbb{N}$ and nothing else. For a fixed $\alpha_i$ the sequence $\alpha_in+\frac{\alpha_i}{2}$ takes only integer values if and only if $\alpha_i$ is an even integer. Therefore, all $\alpha_i$ must be even integers, which is clearly impossible. For example, let $2^b$ the largest two power that divides any of the $\alpha_i$-s. Then $2^{b+1}$ is not in the sequence $\alpha_in+\frac{\alpha_i}{2}$ for any $i$, a contradiction.
\end{proof}

%Note that in the last step of the above proof we really needed that we considered only finitely many $\alpha_i$-s; if we allow infinitely many, then $\alpha_i=2^i$ would also be a solution.

\begin{proof}[Proof of Lemma \ref{lemma:uniform}]

Assume that for every $t$ the colouring $c_t$ contains a monochromatic copy of $\underline{d}$. By symmetry, we may assume that this copy is red. Going around the points corresponding to this monochromatic copy in some cyclic order, we must jump over each blue interval, see Figure \ref{fig:jump}. An arc of distance $d_i$ with red endpoints jumps over $\lfloor td_i \rceil $ blue intervals, where $\lfloor x \rceil$ is the rounding of $x$ to the nearest integer.\footnote{Note that $ td_i$ cannot be a half-integer, as in that case an arc of length $d_i$ cannot be monochromatic.  Therefore, it makes no difference how one rounds half-integers.}
Thus, we must have $\sum_{i=1}^k \lfloor td_i \rceil=t$ for every $t\in \mathbb{Z}^+$. This implies that for every $t>0$ we have $ \sum_{i=1}^k\left (\lfloor td_i \rceil-\lfloor (t-1)d_i \rceil\right )=t-(t-1)=1.$

\begin{figure}[!h]
    \centering
    \begin{tikzpicture}
	\def\linewidth{1pt} % Line thickness
	
	% Draw the circle
	\draw[line width=\linewidth] (0,0) circle (2);
	
	% Draw alternating blue and red arcs with dots
	\foreach \i in {1,...,26} {
		\pgfmathsetmacro{\startangle}{(\i - 1) * 360 / 26}
		\pgfmathsetmacro{\endangle}{\i * 360 / 26}
		\ifodd\i
		\draw[blue, line width=\linewidth] (\startangle:2) arc (\startangle:\endangle:2);
		\fill[blue] (\startangle:2) circle (0.07);	
		\else
		\draw[red, line width=\linewidth] (\startangle:2) arc (\startangle:\endangle:2);
		\fill[red] (\startangle:2) circle (0.07);
		\fi	
		\fill[blue] (0:2) circle (0.07);
		\draw[line width=\linewidth, black] (347:2) -- (70:2);
		\draw[line width=\linewidth, black] (70:2) -- (130:2);
		\draw[line width=\linewidth, black] (130:2) -- (210:2);
		\draw[line width=\linewidth, black] (210:2) -- (240:2);
		\draw[line width=\linewidth, black] (240:2) -- (347:2);
	}
\end{tikzpicture}
    \caption{If we follow the red $k$-gon, we jump over each blue interval exactly once.}
    \label{fig:jump}
\end{figure}
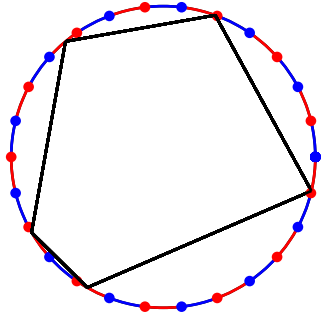

On the other hand, $\near{td_i}-\near{(t-1)d_i}$ is either $0$ or $1$ for each $1\leq i\leq k$. For a fixed $i$, we have $\near{td_i}-\near{(t-1)d_i}=1$ exactly when $t-1$ is in the sequence $\{\floor{(n+\frac{1}{2})\frac{1}{d_i}}\}_{n=0}^{\infty}=\{\floor{n\frac{1}{d_i} +\frac{1}{2d_i}}\}_{n=0}^{\infty}$.
Thus, the sequences $\{\floor{n\frac{1}{d_i} +\frac{1}{2d_i}}\}_{n=0}^{\infty}$ must partition $\mathbb{N}$.
This implies that all the $d_i$-s must be different, so we can use Theorem~\ref{thm:fraenkelspec} to conclude that $d_i=\frac{2^{k-i}}{2^{k}-1}$.
\end{proof}

\section{$(k,2)$-powers in uniform colourings}\label{sec:uniform_other_dir}

We conjecture that for every $t$, the uniform colouring of $c_t$ from Lemma \ref{lemma:uniform} contains a red copy of the $(k,2)$-power for every $k$.
We have seen in the proof of Lemma \ref{lemma:uniform} that in this case we do not run into a contradiction by counting the blue intervals that we would jump over with arcs of length $d_1,\dots, d_k$.
%We have seen in the proof of Lemma \ref{lemma:uniform} that in this case the sides of a red copy `jump' over the $t$ blue intervals.
%We have already seen in the proof of Lemma \ref{lemma:uniform} that in this case the sides of a red copy ``jump'' over the right amount (i.e., $t$) of blue intervals.
However, this is only a necessary and not a sufficient condition, as the jumps are not independent of each other. Indeed, if a jump starts from a `bad' part of a red interval, it might end up inside a blue one. Hence, we need to find a suitable starting position and a good ordering of the $d_i$-s to find a red copy. More precisely, we can consider the problem as follows.

%Let $N=2t(2^k-1)$ and consider any regular $N$-gon.
%Its vertices are coloured such that $t$ reds are followed by $t$ blues in an alternating manner, so that vertices $0,\dots,2^k-1$ are red, $2^k,\dots,2\cdot2^k-1$ are blue, $2\cdot2^k,\dots,3\cdot2^k-1$ are red etc.
Let $N=2t(2^k-1)$ and colour the vertices of a regular $N$-gon such that $2^k-1$ reds are followed by $2^k-1$ blues in an alternating manner, so that vertices $0,\dots,2^k-2$ are red, $2^k-1,\dots,2\cdot2^k-2$ are blue, $2\cdot2^k-1,\dots,3\cdot2^k-2$ are red etc.
If there is a monochromatic copy of the $(k,2)$-power, there is also a red copy.
For each vertex of the red copy of the $(k,2)$-power, consider its index modulo $2^{k+1}-2$.
Each of these needs to be at most $2^k-1$ as they are all red. 
Moreover, the differences among the consecutive vertices need to be $2t \pmod{2^{k+1}-2}$, $4t \pmod{2^{k+1}-2}$, $\dots$, $2^kt \pmod{2^{k+1}-2}$, in some order.
To have such a $k$-tuple of indices modulo $2^{k+1}-2$ is a necessary and sufficient condition for the existence of a red copy.
By computer, we verified this up to $k=20$.

We phrase a problem in a more natural and general form.
Interpret the numbers $2^it \pmod{2^{k+1}-2}$ that are larger than $2^{k}-1$ as $2^{k+1}-2-2^it$, and denote these $k$ numbers by 
$v_1,\dots,v_k$.
With this, the numbers $v_i$ will determine how one vertex moves compared to the preceding vertex in the $0,\dots,2^k-1$ interval.
Note that none of these numbers can be equal to $2^{k}-1$.
Thus, $-2^{k}+1<v_1,\dots,v_k<2^{k}-1$,
and $\sum_{i=1}^k v_k=0$, since the $k$-gon with these side-distances exists.

We get the following even nicer question if we divide by $2^{k}-1$.

\begin{conjecture}\label{conj:balance}
    If a sequence of reals $-1<x_1,\dots,x_k<1$ satisfies 
    
    \begin{equation*}
    x_{i+1}=
    \begin{cases}
      2x_i, & \text{if}\ 2|x_i|<1 \\
      2x_i-2, & \text{if}\ 2x_i>1 \\
      2x_i+2, & \text{if}\ 2x_i<-1 \\
    \end{cases}
   \end{equation*}    
    %$x_{i+1}=2x_i$ if $2|x_i|<1$ and $x_{i+1}=2x_i-2$ if $2x_i>1$ and $x_{i+1}=2-2x_i$ if $2x_i<-1$,
for $i=1,\dots,k$, 
    where $x_{k+1}=x_1$, then there is a permutation $\pi$ of $\{1,\dots,k\}$ such that $0\le \sum_{i=1}^j x_{\pi(i)}<1$ for every $j$.
\end{conjecture}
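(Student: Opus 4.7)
My plan is to work in the integer rescaling $n_i = (2^k-1)x_i/2$, reducing the conjecture to the following purely combinatorial statement. The values $n_i$ form the signed doubling orbit modulo $2^k-1$, each $n_i\in\{-(2^{k-1}-1),\dots,2^{k-1}-1\}$, and $\sum_i n_i = 0$; we want a permutation $\pi$ with all partial sums in $\{0,1,\dots,2^{k-1}-1\}$. The crucial additional structure is that the unsigned residues $m_i = n_i\bmod(2^k-1)$ are precisely the $k$ cyclic rotations of one aperiodic binary word $w$ of length $k$, and $n_i\ge 0$ iff rotation $i$ of $w$ begins with a $0$. So the problem reduces to a combinatorial question about rotations of $w$.

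The first approach I would try is a greedy algorithm: starting from $T_0=0$, at each step select the largest remaining $n_i$ with $T_{j-1}+n_i<2^{k-1}$. A Steinitz-style counting argument shows the upper constraint can always be met (if every remaining element were at least $2^{k-1}-T_{j-1}$, then since the $r$ remaining elements sum to $-T_{j-1}\le 0$ we would obtain $r(2^{k-1}-T_{j-1})\le 0$, contradicting $T_{j-1}<2^{k-1}$). However the companion lower bound $T_{j-1}+n_i\ge 0$ can fail for this rule: already for $k=7$ and the orbit generated by the word $1010101$, after the first two greedy choices ($n=53$ and $n=-21$, reaching $T=32$) the next greedy choice would be $n=-37$, which makes $T$ negative. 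So one needs either a more refined greedy that balances how much \emph{positive slack} and \emph{negative slack} has been used, or an entirely different construction.

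I expect the crux of a complete proof to be a finer accounting lemma linking the rotations of $w$ to the admissible choices at every intermediate partial sum, and this is where the balanced-sequence machinery from the proof of Theorem~\ref{thm:fraenkelspec} should enter. The sign sequence $\mathrm{sgn}(n_i)$ along the doubling orbit is itself balanced over $\{+,-\}$, but to prove the conjecture one must control the magnitudes of admissible elements step by step, not merely their densities, and that strengthening is precisely what the existing techniques do not yield. A natural fall-back plan is induction on $k$ via a splicing operation merging two consecutive orbit elements into one whose sum still lies in the admissible range, but the resulting sequence is in general no longer a doubling orbit modulo $2^{k-1}-1$, so the induction would require a strictly stronger hypothesis covering the pseudo-orbits obtained by repeated splicings; identifying the correct broader class is, I expect, where a genuinely new idea is needed, consistent with the authors having verified the conjecture computationally only up to $k\le 20$.
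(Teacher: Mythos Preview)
The statement you are attempting to prove is labelled \emph{Conjecture} in the paper, and the paper does not prove it; the surrounding discussion only notes the analogy with Steinitz's theorem, observes that $\sum_i x_i=0$, remarks that the naive Steinitz bound with $|x_i|<1/2$ would suffice but is unavailable, and reports computer verification for $k\le 20$. So there is no paper proof against which to compare your proposal.

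Your proposal is not a proof either, and you say so yourself. The integer rescaling and the observation that the unsigned residues are the cyclic rotations of a fixed binary word are correct and potentially useful reformulations, but your greedy algorithm fails (as you demonstrate with the $k=7$ example), your proposed induction via splicing does not preserve the doubling-orbit structure, and you explicitly conclude that a ``genuinely new idea is needed.'' That is an honest assessment of the situation, consistent with the paper's own stance, but it means there is no argument here to evaluate for correctness --- only a catalogue of approaches that do not work. If the assignment was to prove the statement, the gap is simply that no proof is given; if it was to explore the problem, your write-up is a reasonable survey of obstructions, though the claim that ``the sign sequence $\mathrm{sgn}(n_i)$ along the doubling orbit is itself balanced over $\{+,-\}$'' would need justification, and the reference to Theorem~\ref{thm:fraenkelspec} is speculative rather than substantive.
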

% \begin{conjecture}
%     For every positive integers $k$ and $v_1<2^k-1$, if for $i=1,\dots,l$ we have $v_{i+1}=2v_i$ if $2|v_i|<2^k-1$ and $v_{i+1}=2^{k+1}-2-2v_i$ if $2v_i>2^k-1$ and $v_{i+1}=2v_i2^{k+1}+2$ if $2v_i<-2^k+1$, such that $v_{l+1}=v_1$, then there is a permutation $\pi$ of $\{1,\dots,l\}$ such that $0\le \sum_{i=1}^j v_\pi(i)<2^k-1$ for every $j$.
% \end{conjecture}
This conjecture is similar to Steinitz's theorem \cite{Steinitz}, and to other vector balancing problems. 
%HIVATKOZZUNK MÁST? Sztem ez elég ebbe a verzióba.
Indeed, %by arguing with an appropriate $k$-gon,
it can be proved for any $x_i$'s satisfying the conditions of the conjecture, that $\sum_{i=1}^k x_i=0$.
We note that if the $x_i$'s are any sequence satisfying $\sum_{i=1}^k x_i=0$ and $|x_i|<1/2$ for every $i$, then one can easily find a permutation for which $0\le \sum_{i=1}^j x_{\pi(i)}<1$ for every $j$. %, provided that $\sum_{i=1}^k x_i=0$---this follows from the conditions of the conjecture.
But without this bound, we have to exploit that $x_{i+1}=2x_i$, as otherwise there would be counterexamples, e.g., $0.6,0.6,0.6,-0.9,-0.9$.
Could it be that the conjecture is true because we always have many $i$'s such that $|x_i|<1/2$, and these can be used somehow to take care of the other $x_i$'s?

\section{Robust version}\label{sec:robust}

%We only consider the nearly version for triples. 
%Two triples $(a,b,c)$ and $(a',b',c')$ in $S$ are \emph{$\varepsilon$-close} if $|a-a'|,|b-b'|,|c-c'|\leq \varepsilon$. A triple of points $(p_1,p_2,p_3)$ in $S$ is an \emph{$\varepsilon$-close copy of $\underline{d}$} if it is $\varepsilon$-close to a copy of $\underline{d}$.
%A triple $\underline{d}=(d_1,d_2,d_3)$ with $d_1+d_2+d_3=1$ and $d_1\geq d_2\geq d_3$ is \emph{nearly-Ramsey} if for every $\varepsilon >0$ in every two-colouring of $S$ there is a monochromatic $\varepsilon$-close copy of $\underline{d}$. Note that a Ramsey triple is also nearly-Ramsey, thus $(\frac{1}{7}, \frac{2}{7}, \frac{4}{7})$ is nearly-Ramsey.
Theorem \ref{thm:robust} states that additionally $(\frac{5}{8},\frac{1}{4},\frac{1}{8})$, $(\frac{3}{4},\frac{1}{6},\frac{1}{12})$, $(\frac{7}{12},\frac{1}{4},\frac{1}{6})$ and any triple with $d_1=\frac12$ are also nearly-Ramsey, that is, in every two-colouring of $S$ there is a monochromatic $\varepsilon$-close copy of $\underline{d}$ for every $\varepsilon$.
%\begin{claim}\label{claim:robust_triples} The triples $(d_1,d_2,d_3)$ with $d_1=\frac{1}{2}$, and the triples $(\frac{5}{8},\frac{1}{4},\frac{1}{8})$, $(\frac{3}{4},\frac{1}{6},\frac{1}{12})$, $(\frac{7}{12},\frac{1}{4},\frac{1}{6})$ are nearly-Ramsey. \end{claim}
We conjecture that there are no other nearly-Ramsey triples.

\begin{conjecture}\label{conj:robust_ramsey}
$(d_1,d_2,d_3)$ is nearly-Ramsey if and only if it is $(\frac{4}{7},\frac{2}{7},\frac{1}{7})$, $(\frac{5}{8},\frac{1}{4},\frac{1}{8})$, $(\frac{3}{4},\frac{1}{6},\frac{1}{12})$, $(\frac{7}{12},\frac{1}{4},\frac{1}{6})$ or a triple with $d_1=\frac{1}{2}$.
\end{conjecture}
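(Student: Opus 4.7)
The conjecture is a biconditional; its ``if'' direction is Theorem~\ref{thm:robust}, and the content lies in the converse. I must show that for every triple $\d=(d_1,d_2,d_3)$ with $d_1\neq\tfrac12$ that is not one of the four sporadic triples, there exist $\varepsilon>0$ and a two-colouring of $S$ containing no monochromatic $\varepsilon$-close copy of $\d$. My plan is to attempt this via uniform colourings $c_t$, extending the argument of Lemma~\ref{lemma:uniform}.

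First I would extract a Diophantine characterisation of when $c_t$ admits an $\varepsilon$-close monochromatic copy. Parametrising a red monochromatic triple in $c_t$ whose vertices lie in the even-indexed arcs $a_1,a_2,a_3$, the arc distances are $(j_i+\delta_i)/(2t)$ with $j_i:=a_{i+1}-a_i\pmod{2t}$ even, $\delta_i\in(-1,1)$, and $\sum_i j_i = 2t$. Such a triple is $\varepsilon$-close to a copy of $\d$ exactly when $|2td_{\sigma(i)}-j_i|\le 1+4t\varepsilon$ for some permutation~$\sigma$ of $\{1,2,3\}$. Hence $c_t$ blocks near-Ramsey for some $\varepsilon>0$ iff no permutation $\sigma$ and no triple of even integers $(j_1,j_2,j_3)$ summing to $2t$ satisfies $|2td_{\sigma(i)}-j_i|\le 1$ for every $i$; I will call this the \emph{Diophantine obstruction} at $(\d,t)$.

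Second, I would classify the $\d$ for which the Diophantine obstruction holds at some $t$. The crucial observation is that, for fixed $t$, the set of admissible even $j_i$ is a singleton unless $2td_i$ is itself an odd integer; in the ``generic'' case the condition $\sum j_i=2t$ collapses to the exact-Ramsey equation $\sum_i\near{td_i}=t$ of Lemma~\ref{lemma:uniform}, which fails for some $t$ by Theorem~\ref{thm:fraenkelspec} whenever $\d$ is not the $(3,2)$-power. The genuinely new nearly-Ramsey tuples must therefore exploit half-integer coincidences $td_i\in\tfrac12+\mathbb{Z}$ for more than one $i$, at every $t$ where the generic condition would otherwise fail. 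A careful case analysis on the small denominators of the $d_i$ should show that this can happen only for the denominators $7,8,12$ appearing in the four sporadic triples, or when $d_1=\tfrac12$ contributes an antipodal flexibility that automatically propagates at every $t$.

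The main obstacle I foresee is that uniform colourings may not suffice for some non-listed $\d$: the Diophantine obstruction might be solvable at every $t$ due to a fortuitous pattern of half-integer coincidences, even though $\d$ is still not nearly-Ramsey. For such triples one would have to modify $c_t$---for instance by shifting a few arc boundaries, by interpolating between two scales, or by a suitable random construction---in order to destroy the precise monochromatic copies whose existence the uniform structure forces. Handling these intermediate cases uniformly is likely the reason the statement remains a conjecture; a complete proof will probably need a slack-tolerant strengthening of Theorem~\ref{thm:fraenkelspec}, together with an explicit transfer of the resulting combinatorial data back to a colouring of $S$.
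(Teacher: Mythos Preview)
This statement is labelled a \emph{conjecture} in the paper, and the paper does not prove it. Only the ``if'' direction is established (this is exactly Theorem~\ref{thm:robust}); for the converse the paper offers partial evidence, not a proof. So there is no full argument to compare your proposal against.

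That said, your plan lines up closely with the paper's partial progress. Your ``Diophantine obstruction at $(\d,t)$'' is essentially the paper's notion of a \emph{strongly-suitable} $t$: the paper observes (Observation~\ref{obs:robust_suitable}) that a suitable $t$ is strongly-suitable iff none of $2td_1,2td_2,2td_3$ is an odd integer, which is precisely your distinction between the ``generic'' situation (each $j_i$ forced to $2\near{td_i}$) and the case where half-integer coincidences give extra slack. Your reduction of the generic case to $\sum_i\near{td_i}=t$ and then to Theorem~\ref{thm:fraenkelspec} matches the paper's strategy via Lemma~\ref{lemma:uniform}. The paper also disposes of the irrational $d_i$ quickly along the lines you indicate, and records that when all $q_i$ are odd (and in several further cases, proofs omitted) one can find a strongly-suitable $t$ in the set $T=\{t: q_1,q_2,q_3\nmid 2t\}$.

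One point where you diverge from the paper: you speculate that uniform colourings may be insufficient and that perturbed or random constructions might be needed. The authors believe the opposite---they explicitly state that for every $\d$ not on the list some $c_t$ should work, so the obstruction is purely the number-theoretic one of locating a strongly-suitable $t$. Your proposed ``slack-tolerant strengthening of Theorem~\ref{thm:fraenkelspec}'' is a reasonable description of what is missing, but at this point it is a programme, not a proof; the case analysis on denominators you sketch has not been completed in the paper and you have not completed it either.
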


We prove Theorem \ref{thm:robust} %\ref{claim:robust_triples} %for triples with $d_1=\frac{1}{2}$,
and provide some supporting evidence for Conjecture \ref{conj:robust_ramsey}.
%In a red-blue colouring of $S$, 
We recolour a point $p\in S$ with \emph{black} if there is a red and a blue point in every neighbourhood of $p$. 
%Then the black points form a compact set, whose complement is a (possibly infinite) union of red intervals and blue intervals. %Further, 
If a colouring of $S$ is not monochromatic, then there is at least one black point.
If we can find an $\varepsilon$-close copy of $\underline{d}$ such that it only has red and black points (or blue and black), then we can also find a $2\varepsilon$-close copy of it with only red (or only blue) points, by slightly moving the black points of the corresponding triple in $S$.

%Therefore, each $\eps$-close triangle needs to have a red and a blue vertex in a robust colouring. EZT HOGY HIVJUK, IGY?
%In particular, no $\eps$-close triangle can have two black vertices.

\begin{proof}[Proof of Theorem \ref{thm:robust}] %Claim \ref{claim:robust_triples}]
We show that for every $\varepsilon>0$ every red-blue colouring contains a monochromatic $\varepsilon$-close copy of any triple listed in the statement. We may assume that the colouring is not monochromatic, otherwise the statement is trivial. Thus, we may assume the existence of a black point.

\textbf{Case 1: } $d_1=\frac{1}{2}$
Let $p$ be a black point, $p'$ be the point diametrically opposite to $p$, and $q$ and $q'$ be two other diametrically opposite points, such that any three of the four points $p,p',q,q'$ form a copy of $(d_1,d_2,d_3)$.
By the pigeonhole principle, without loss of generality, we may assume that at most one of $p',q,q'$ is blue.
But then the other three points form a copy of $(d_1,d_2,d_3)$ without a blue point.

{\bf Case 2: $\underline{d}=(\frac{5}{8},\frac{1}{4},\frac{1}{8})$} We may pick a regular $8$-gon inscribed in $S$ with vertices $v_1,v_2,\dots,v_8$ in this cyclic order such that $v_1$ is black. Without loss of generality, we may assume that $v_2$ is blue. Then $v_4$ and $v_7$ must be red, otherwise we are done by considering $(v_1,v_2,v_4)$ or $(v_1,v_2,v_7)$. Then by similar a similar argument, $v_3,v_5$ and $v_6$ must be blue. But then $(v_3,v_5,v_6)$ forms a blue copy of $(\frac{5}{8},\frac{1}{4},\frac{1}{8})$.

{\bf Case 3: $\underline{d}=(\frac{3}{4},\frac{1}{6},\frac{1}{12})$} We may pick a regular $12$-gon inscribed in $S$ with vertices $v_1,v_2,\dots,v_{12}$ in this cyclic order such that $v_1$ is black. Without loss of generality, we may assume that $v_2$ is blue. Then $v_4$ and $v_{11}$ must be red, otherwise we are done by considering $(v_1,v_2,v_4)$ or $(v_1,v_2,v_{11})$. Then by a similar argument, $v_3$ and $v_{10}$ must be blue. By considering $(v_2,v_3,v_5)$ and $(v_2,v_3,v_{12})$, we obtain that $v_5$ and $v_{12}$ must be red. Similarly, $v_7$ and $v_9$ must be blue. But then $(v_7,v_9,v_{10})$ forms a blue copy of $(\frac{3}{4},\frac{1}{6},\frac{1}{12})$. 

{\bf Case 4: $\underline{d}=(\frac{7}{12},\frac{1}{4},\frac{1}{6})$}
We may pick a regular $12$-gon inscribed in $S$ with vertices $v_1,v_2,\dots,v_{12}$ in this cyclic order such that $v_1$ is black. Without loss of generality, we may assume that $v_3$ is blue. Then $v_6$ and $v_{10}$ must be red, otherwise we are done by considering $(v_1,v_3,v_6)$ or $(v_1,v_6,v_{10})$. Then by a similar argument, $v_4$ and $v_8$ must be blue. Considering $(v_1,v_8,v_{11})$ and $(v_3,v_5,v_8)$ we obtain that $v_{11}$ and $v_5$ mist be red. Similarly, $v_7$ and $v_9$ must be blue. But then $(v_4,v_7,v_9)$ forms a blue copy of $(\frac{7}{12},\frac{1}{4},\frac{1}{6})$.
%The proof for $(\frac{5}{8},\frac{1}{4},\frac{1}{8})$, $(\frac{3}{4},\frac{1}{6},\frac{1}{12})$, $(\frac{7}{12},\frac{1}{4},\frac{1}{6})$ is by a case analysis of the possible colourings of a regular 8-gon/12-gon, respectively, with a black point.
%For $d_1=\frac{1}{2}$, we show that for every $\varepsilon>0$ every red-blue colouring contains a monochromatic $\varepsilon$-close copy of a given triple $(d_1,d_2,d_3)$ with $d_1=\frac{1}{2}$. We may assume that the colouring is not monochromatic, otherwise the statement is trivial. Thus, we may assume the existence of a black point $p$.
%Let $p'$ be the point diametrically opposite to $p$, and $q$ and $q'$ be two other diametrically opposite points, such that any three of the four points $p,p',q,q'$ form a copy of $(d_1,d_2,d_3)$.
%By the pigeonhole principle, without loss of generality, we may assume that at most one of $p',q,q'$ is blue.
%But then the other three points form a copy of $(d_1,d_2,d_3)$ without a blue point.
\end{proof}

%Let $\underline{d}=(d_1,d_2,d_3)$ be a triple that Conjecture~\ref{conj:robust_ramsey} asserts to be not nearly-Ramsey.  We believe that for any such $\underline{d}$, there is a uniform colouring $c_t$ as in Lemma~\ref{lemma:uniform} that contains no monochromatic $\varepsilon$-close copies of $\underline{d}$.
%We have verified this in several special cases, and a more careful analysis could be sufficient to obtain a proof for the remaining cases.

Let $\underline{d}=(d_1,d_2,d_3)$ be a triple that Conjecture~\ref{conj:robust_ramsey} asserts to be not nearly-Ramsey. We believe that for any such $\underline{d}$, there is a uniform colouring $c_t$ as in Lemma~\ref{lemma:uniform} that contains no monochromatic $\varepsilon$-close copies of $\underline{d}$. We call $t\in \mathbb{Z}^+$ \emph{suitable} if $c_t$ contains no monochromatic copies of $\underline{d}$, and \emph{strongly-suitable}, if $c_t$ contains no monochromatic $\varepsilon$-close copies of $\underline{d}$.  In $c_t$ the black points are exactly the endpoints of the intervals. Thus, $t$ is strongly-suitable if and only if it is suitable and $c_t$ avoids copies of $\underline{d}$ where one point is the clockwise endpoint of a red interval and another one is the endpoint of blue interval. As the distance of two such endpoints (along the circumference) is an odd multiple of $\frac1{2t}$, we obtain the following observation.

\begin{observation}\label{obs:robust_suitable}
A suitable $t$ is strongly-suitable if and only if none of $2td_1,2td_2,2td_3$ is an odd integer.
\end{observation}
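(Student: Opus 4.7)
The plan is to recast strong-suitability into a purely combinatorial statement about copies of $\underline{d}$ whose vertices include boundary points of the colouring, and then extract the parity condition from the positions of those boundary points. In $c_t$ the black points are exactly the endpoints of the $2t$ arcs, and these split into two kinds: \emph{type c} endpoints (the clockwise endpoints of red intervals), which are the positions $\frac{k}{2t}$ with $k$ odd, and \emph{type d} endpoints (the clockwise endpoints of blue intervals), which are the positions $\frac{k}{2t}$ with $k$ even.

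First I would show that, given suitability, $t$ is strongly-suitable if and only if no copy of $\underline{d}$ in $S$ contains both a type c vertex and a type d vertex. One direction is the already-observed perturbation: if a copy contains a type c and a type d vertex, then perturbing the type c vertex counterclockwise and the type d vertex clockwise by at most $\varepsilon$ gives an $\varepsilon$-close monochromatic red copy for every $\varepsilon>0$ (and perturbing the other way gives a blue one), so strong-suitability fails. For the other direction, assume $t$ is suitable and strong-suitability fails; by a compactness argument there is then an exact copy of $\underline{d}$ lying entirely in the closure of red (or blue), and since $t$ is suitable, at least one vertex is a type d endpoint. A rigid-shift argument now finishes the step: if no vertex of this copy were type c, then every vertex would be either type d or in the red interior, and shifting the whole copy clockwise by a sufficiently small $\delta>0$ would move every type d vertex into the adjacent open red arc while keeping every red-interior vertex inside its arc, yielding an exact red monochromatic copy --- a contradiction. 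Thus both a type c and a type d vertex must be present.

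Finally I would turn this combinatorial condition into the arithmetic statement. In any triple copy of $\underline{d}$, the arc between any two vertices that does not pass through the third equals exactly one of $d_1,d_2,d_3$. If one vertex is type c (at $\frac{k_1}{2t}$ with $k_1$ odd) and another is type d (at $\frac{k_2}{2t}$ with $k_2$ even), the direct arc between them is $\frac{|k_1-k_2|}{2t}\pmod 1$, an odd multiple of $\frac{1}{2t}$, forcing $2td_i$ to be an odd integer for the corresponding $i$. Conversely, if $2td_i$ is an odd integer, one explicitly constructs the desired copy by placing a vertex at $\frac{1}{2t}$ (type c) and its cyclic successor at $\frac{1+2td_i}{2t}$ (type d, since $1+2td_i$ is even) and completing with the third vertex. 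I expect the main obstacle to be the rigid-shift step: one has to choose $\delta$ smaller than $\frac{1}{2t}$ and smaller than the distance from each red-interior vertex to the clockwise endpoint of its arc, so that all shifted vertices land in the correct open red arcs simultaneously.
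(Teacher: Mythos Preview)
Your argument is correct and matches the paper's approach: the paper states, essentially without proof, the intermediate equivalence (strongly-suitable $\Leftrightarrow$ no copy with one clockwise-red endpoint and one clockwise-blue endpoint) and then reads off the parity condition from the fact that such endpoints are an odd multiple of $\frac{1}{2t}$ apart; you supply the compactness and rigid-shift details the paper omits.

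One small slip to fix in the perturbation direction: as written you claim that perturbing the type~c vertex one way and the type~d vertex the other always yields a \emph{red} $\varepsilon$-close copy (and the reverse a blue one). This fails when the third vertex lies in the blue interior --- you then cannot get a red copy at all. The correct statement is that you always get a monochromatic $\varepsilon$-close copy of \emph{some} colour: perturb the two black vertices toward whichever colour the third vertex has (or either colour, if the third vertex is itself black). That is all you need to conclude that strong-suitability fails.
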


If one of $d_1,d_2,d_3$ is irrational, then at most one of them is rational. If $d_1,d_2,d_3$ are all irrational, then any suitable $t$ is also strongly-suitable, thus such $(d_1,d_2,d_3)$ is not nearly-Ramsey. If only one of them, say $d_1$ is rational, then we write $d_1=\frac{p_1}{q_1}$ such that $p_1,q_1$ are integers and $\textrm{gcd}(p_1,q_1)=1$. If $q_1=2$, then $(d_1,d_2,d_3)$ is not nearly-Ramsey, and for any other value of $q_1$ there is a $t$ such that $2td_1\notin \mathbb{Z}$.

If $d_1,d_2,d_3$ are all rational, we write $d_i=\frac{p_i}{q_i}$ such that $p_i,q_i$ are integers and $\textrm{gcd}(p_i,q_i)=1$ for $i=1,2,3$.
To prove Conjecture \ref{conj:robust_ramsey}, it is thus sufficient to find a suitable $t$ in $T:=\setbuilder{t}{q_1,q_2,q_3\nmid 2t}$. We can prove that there is such $t$ if $q_1,q_2,q_3$ are all odd, as well as in several other cases, but here we omit these proofs. A more careful analysis could be sufficient to obtain a proof for the remaining cases.

\section{Majority version}
\subsection*{Counterexample to the majority version} One might assume that if in a two-colouring one colour class is denser than the other, then that colour class will contain a $(k,2)$-power.
However, this is false; we show a counterexample for any $k\ge 6$.
%Let $0<\eps<1/80$, and divide $S$ into $10$ intervals of lengths $1/8-\varepsilon,1/16+\varepsilon,1/8-\varepsilon,1/16+\varepsilon,1/8-\varepsilon,1/16+\varepsilon,1/8-\varepsilon,1/8+\varepsilon,1/16-\varepsilon,1/8+\varepsilon$ in this order, and colour them alternating red and blue, starting with red. Then the set of red points has total length $1/2+1/16-5\eps>1/2$, % if $\eps<1/80$.
%but a straight-forward  case analysis shows that  there is no red copy of a $(k,2)$-power for $k\geq 8$.

Let $k\geq 6$ and fix an $\varepsilon$ with $\frac{2^{k-1}}{2^k-1}-\frac{1}{2}<\varepsilon<\frac{1}{80}$. Divide $S$ into $10$ intervals of lengths $\frac{1}{16}-\varepsilon,\frac{1}{8}+\varepsilon,\frac{1}{8}-\varepsilon,\frac{1}{16}+\varepsilon,\frac{1}{8}-\varepsilon,\frac{1}{16}+\varepsilon,\frac{1}{8}-\varepsilon,\frac{1}{16}+\varepsilon,\frac{1}{8}-\varepsilon,\frac{1}{8}+\varepsilon$ in this order, and colour them alternating red and blue, starting with red (see Figure \ref{fig:majority}). Then the set of red points is $\frac{1}{8}-10\varepsilon$ denser than the set of blue points.

\begin{claim} The colouring defined above does not contain a red copy of a $(k,2)$-power for any $k\geq 6$.
\end{claim}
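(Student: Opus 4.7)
The plan is to leverage the fact that $d_1 = \frac{2^{k-1}}{2^k-1} > \frac{1}{2}$, so in any copy of the $(k,2)$-power the $k$ vertices lie on an arc of length $1 - d_1 = \frac{1}{2} - \delta$, where $\delta = \frac{1}{2(2^k-1)}$. Label these vertices $p_1, \dots, p_k$ in cyclic order so that the $d_1$-gap runs from $p_k$ back to $p_1$; then the $k-1$ chain gaps $p_{i+1} - p_i$ (for $1 \leq i \leq k-1$) form a permutation of $d_2, \dots, d_k$. The hypothesis $\varepsilon > \frac{2^{k-1}}{2^k-1} - \frac{1}{2}$ is exactly $\varepsilon > \delta$, and this will be the key numerical relation that the proof exploits.

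The first step is to pin down which red intervals $p_1$ and $p_k$ can inhabit. For each $R_i$, I would compute the shifted interval $R_i + (1 - d_1) \pmod 1$ and check its intersection with the red set. Using $\varepsilon > \delta$ together with the explicit endpoints of the $R_j$'s, the shifts of $R_1$, $R_3$, and $R_4$ each fall entirely into blue, and only two configurations survive: case (a), $p_1 \in R_2$ and $p_k \in R_5$ (the chain arc runs directly through $R_3$ and $R_4$), and case (b), $p_1 \in R_5$ and $p_k \in R_2$ (the chain arc wraps around through $R_1$).

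The second step is a bridging argument. Every blue interval inside the chain arc must be fully contained in some chain gap $(p_i, p_{i+1})$, so that gap has length at least the blue interval's length. First, I would verify that no single chain gap can contain two blue intervals at once: the minimum span of any two blues in the chain arc is at least $\frac{1}{4} + \varepsilon$, while the largest chain gap $d_2 = \frac{1}{4} + \frac{1}{4(2^k-1)}$ is less than $\frac{1}{4} + \varepsilon$ (from $\varepsilon > \delta$). Hence each blue needs its own bridging gap. In case (a), the three short blues $B_2, B_3, B_4$ (each of length $\frac{1}{16} + \varepsilon$) require three distinct chain gaps of length $\geq \frac{1}{16} + \varepsilon$; but $d_4 = \frac{1}{16} + \frac{1}{16(2^k-1)} < \frac{1}{16} + \varepsilon$ leaves only $d_2$ and $d_3$ available, a contradiction. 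In case (b), the two long blues $B_1, B_5$ (each of length $\frac{1}{8} + \varepsilon$) require two distinct chain gaps of length $\geq \frac{1}{8} + \varepsilon$; but $d_3 = \frac{1}{8} + \frac{1}{8(2^k-1)} < \frac{1}{8} + \varepsilon$ leaves only $d_2$, again a contradiction.

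The main obstacle I anticipate is carrying out the enumeration in Step~1 rigorously, since the shifts of $R_1, R_3, R_4$ miss the red set by only the narrow margin $\varepsilon - \delta$, and the surviving shifts of $R_2, R_5$ constrain $p_1$ to narrow sub-intervals. The elegance of the construction is that the single inequality $\varepsilon > \frac{1}{2(2^k-1)}$ simultaneously forces $d_2 < \frac{1}{4} + \varepsilon$, $d_3 < \frac{1}{8} + \varepsilon$, and $d_4 < \frac{1}{16} + \varepsilon$, which is precisely what shuts down both cases uniformly for every $k \geq 6$.
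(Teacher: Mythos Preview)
Your proposal is correct and follows essentially the same route as the paper's proof: both pin down the endpoints of the $d_1$-arc to the two red intervals you call $R_2$ and $R_5$ (the paper's $E_3$ and $E_9$) by shifting each red interval by $1-d_1$, and then in each of the two resulting cases use the bridging/counting argument that the blues inside the chain arc require more ``large'' gaps among $d_2,\dots,d_k$ than are available, via the inequalities $d_2<\tfrac14+\varepsilon$, $d_3<\tfrac18+\varepsilon$, $d_4<\tfrac1{16}+\varepsilon$. The only cosmetic difference is that the paper eliminates your case~(b) first (forcing the $d_1$-arc to contain both long blues) and then derives the contradiction in case~(a), whereas you treat the two cases in parallel.
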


\begin{proof}  Denote $i$-th interval from the description of the colouring by $E_i$. Suppose we have found points $p_1,\dots, p_k$ in the red intervals with distances $d_i=\frac{2^{k-1-i}}{2^{k}-1}$. To avoid confusion, we will refer to the parts of the circle between consecutive $p_i$-s as \emph{arcs}. Note that each blue interval must be contained in one of these arcs. 

Using that $\frac{2^{k-1}}{2^k-1}<\frac{1}{2}+\varepsilon$, we can see that any arc of length $d_1=\frac{2^{k-1}}{2^k-1}$ starting from $E_5$ ends in $E_{10}$. Similarly, any arc of length $d_1$ starting from $E_7$ ends in $E_{2}$ and any starting from $E_1$ ends in $E_6$. Hence, if there was a red copy of the $(k,2)$-power, the endpoints of the arc with length $d_1$ must be in $E_3$ and $E_{9}$. Therefore, the arc of length $d_1$ contains either $E_4,\dots, E_8$ or $E_{10},E_1,E_{2}$. 

 The line connecting the endpoints of the arc of length $d_1$ divides $S$ into two parts. In the part that contains $E_{2}$, there are two blue intervals, $E_{10}$ and $E_{2}$, of length $\frac{1}{8}+\varepsilon$. The arc of length $d_2$ cannot contain both $E_{10}$ and $E_{2}$, as $d_2=\frac{2^{k-2}}{2^k-1}<\frac{5}{16}+\varepsilon$. But $d_3=\frac{2^{k-3}}{2^{k}-1}<\frac{1}{8}+\varepsilon$, so shorter arcs cannot contain any of $E_{10}$ and $E_{2}$, thus, they must be contained in the arc of length $d_1$. 

 On the other side, there are three blue intervals of length $\frac{1}{16}+\varepsilon$, with reds of length $\frac{1}{8}-\varepsilon$ between them.
 The arc of length $d_2$ can only cover one of the blue intervals, as $d_2=\frac{2^{k-2}}{2^k-1}<\frac{1}{4}+\varepsilon$. Then, each of the remaining two blue intervals of length $\frac{1}{16}+\varepsilon$ must be covered by an arc of length $d_i$ for $i\geq 3$, which is not possible, since
$d_4=\frac{2^{k-4}}{2^k-1}<\frac{1}{16}+\varepsilon$.
\begin{figure}[!h]
    \centering
  \definecolor{qqqqff}{rgb}{0.,0.,1.}
\definecolor{ffqqqq}{rgb}{1.,0.,0.}
\begin{tikzpicture}[line cap=round,line join=round,x=0.35cm,y=0.35cm]
\clip(6.0,-9.000434657070617) rectangle (31.636311831480388,17.249059137450583);
\draw [shift={(18.,4.)},line width=2.2pt,color=ffqqqq]  plot[domain=1.0311649059865737:1.7191520251733377,variable=\t]({1.*10.269836329858054*cos(\t r)+0.*10.269836329858054*sin(\t r)},{0.*10.269836329858054*cos(\t r)+1.*10.269836329858054*sin(\t r)});
\draw [shift={(18.,4.)},line width=2.2pt,color=ffqqqq]  plot[domain=2.2092621510827457:2.8972492702695147,variable=\t]({1.*10.269836329858052*cos(\t r)+0.*10.269836329858052*sin(\t r)},{0.*10.269836329858052*cos(\t r)+1.*10.269836329858052*sin(\t r)});
\draw [shift={(18.,4.)},line width=2.2pt,color=ffqqqq]  plot[domain=3.387359396178918:4.075346515365683,variable=\t]({1.*10.269836329858041*cos(\t r)+0.*10.269836329858041*sin(\t r)},{0.*10.269836329858041*cos(\t r)+1.*10.269836329858041*sin(\t r)});
\draw [shift={(18.,4.)},line width=2.2pt,color=ffqqqq]  plot[domain=4.565456641275091:5.253443760461865,variable=\t]({1.*10.269836329858045*cos(\t r)+0.*10.269836329858045*sin(\t r)},{0.*10.269836329858045*cos(\t r)+1.*10.269836329858045*sin(\t r)});
\draw [shift={(18.,4.)},line width=2.2pt,color=ffqqqq]  plot[domain=-0.1469323391095978:0.14835569837844673,variable=\t]({1.*10.269836329858048*cos(\t r)+0.*10.269836329858048*sin(\t r)},{0.*10.269836329858048*cos(\t r)+1.*10.269836329858048*sin(\t r)});
\draw [shift={(18.,4.)},line width=2.2pt,color=qqqqff]  plot[domain=1.7191520251733377:2.2092621510827457,variable=\t]({1.*10.269836329858055*cos(\t r)+0.*10.269836329858055*sin(\t r)},{0.*10.269836329858055*cos(\t r)+1.*10.269836329858055*sin(\t r)});
\draw [shift={(18.,4.)},line width=2.2pt,color=qqqqff]  plot[domain=2.8972492702695147:3.387359396178918,variable=\t]({1.*10.269836329858054*cos(\t r)+0.*10.269836329858054*sin(\t r)},{0.*10.269836329858054*cos(\t r)+1.*10.269836329858054*sin(\t r)});
\draw [shift={(18.,4.)},line width=2.2pt,color=qqqqff]  plot[domain=4.075346515365683:4.565456641275091,variable=\t]({1.*10.269836329858057*cos(\t r)+0.*10.269836329858057*sin(\t r)},{0.*10.269836329858057*cos(\t r)+1.*10.269836329858057*sin(\t r)});
\draw [shift={(18.,4.)},line width=2.2pt,color=qqqqff]  plot[domain=5.253443760461865:6.136252968069988,variable=\t]({1.*10.269836329858045*cos(\t r)+0.*10.269836329858045*sin(\t r)},{0.*10.269836329858045*cos(\t r)+1.*10.269836329858045*sin(\t r)});
\draw [shift={(18.,4.)},line width=2.2pt,color=qqqqff]  plot[domain=0.14835569837844673:1.0311649059865737,variable=\t]({1.*10.269836329858054*cos(\t r)+0.*10.269836329858054*sin(\t r)},{0.*10.269836329858054*cos(\t r)+1.*10.269836329858054*sin(\t r)});
\draw (28.8,4.8) node[anchor=north west] {$E_1$};
\draw (26.8,11.5) node[anchor=north west] {$E_2$};
\draw (20.1,16.0) node[anchor=north west] {$E_3$};
\draw (12.5,15.7) node[anchor=north west] {$E_4$};
\draw (7.5,11.8) node[anchor=north west] {$E_5$};
\draw (5.4,4.8) node[anchor=north west] {$E_6$};
\draw (7.724101730992175,-1.9286532238434442) node[anchor=north west] {$E_7$};
\draw (12.518529821315626,-5.9439867494893805) node[anchor=north west] {$E_8$};
\draw (20.1,-6.3) node[anchor=north west] {$E_9$};
\draw (26.8,-1.5) node[anchor=north west] {$E_{10}$};
\draw (18.5,13.4) node[anchor=north west] {$\frac{1}{8}-\varepsilon$};
\draw (13.4,13.1) node[anchor=north west] {$\frac{1}{16}+\varepsilon$};
\draw (10.0,9.8) node[anchor=north west] {$\frac{1}{8}-\varepsilon$};
\draw (8.3,5.0) node[anchor=north west] {$\frac{1}{16}+\varepsilon$};
\draw (10.0,-0.2) node[anchor=north west] {$\frac{1}{8}-\varepsilon$};
\draw (13.4,-3.0) node[anchor=north west] {$\frac{1}{16}+\varepsilon$};
\draw (18.5,-3.4) node[anchor=north west] {$\frac{1}{8}-\varepsilon$};
\draw (22.9,0.3) node[anchor=north west] {$\frac{1}{8}+\varepsilon$};
\draw (24.3,5.0) node[anchor=north west] {$\frac{1}{16}-\varepsilon$};
\draw (22.9,9.7) node[anchor=north west] {$\frac{1}{8}+\varepsilon$};
\begin{scriptsize}
\draw [fill=black] (23.276845486940676,12.810473310159118) circle (1.5pt);
\draw [fill=black] (19.50354737451634,14.159177305995666) circle (1.5pt);
\draw [fill=black] (15.501347804012788,13.96123865016598) circle (1.5pt);
\draw [fill=black] (11.879545380009723,12.246791708701751) circle (1.5pt);
\draw [fill=black] (9.189526689840891,9.276845486940678) circle (1.5pt);
\draw [fill=black] (7.840822694004345,5.503547374516344) circle (1.5pt);
\draw [fill=black] (8.03876134983403,1.5013478040127888) circle (1.5pt);
\draw [fill=black] (9.753208291298257,-2.120454619990273) circle (1.5pt);
\draw [fill=black] (12.723154513059331,-4.810473310159107) circle (1.5pt);
\draw [fill=black] (16.496452625483663,-6.159177305995655) circle (1.5pt);
\draw [fill=black] (20.49865219598722,-5.961238650165972) circle (1.5pt);
\draw [fill=black] (24.120454619990277,-4.246791708701744) circle (1.5pt);
\draw [fill=black] (26.810473310159114,-1.276845486940669) circle (1.5pt);
\draw [fill=black] (28.159177305995662,2.49645262548367) circle (1.5pt);
\draw [fill=black] (27.96123865016598,6.498652195987221) circle (1.5pt);
\draw [fill=black] (26.246791708701753,10.12045461999028) circle (1.5pt);
\end{scriptsize}
\end{tikzpicture}

\caption{A counterexample for the majority version. The black dots are the vertices of a regular 16-gon, depicted only to ease the comparison of distances in the figure.}
    \label{fig:majority}
\end{figure}
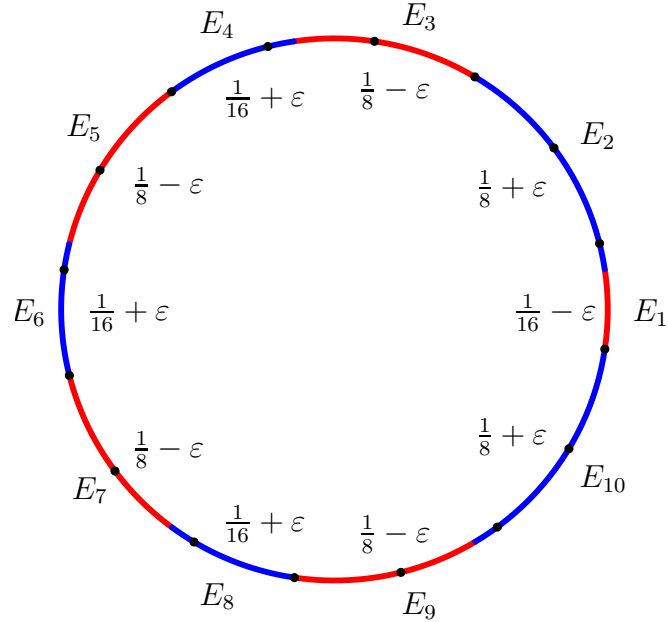
\end{proof}

The following problem remains open.

\begin{problem}
Does there exist an $\varepsilon >0$ such that every subset of $S$ of density at least $(1-\varepsilon)$ contains a monochromatic copy of the $(k,2)$-power for all $k$?
\end{problem}

\section{Computational efforts}\label{sect:computer}
%We colour the vertices of a regular $2^m-1$-gon with two colours. Is it true that we can always find a monochromatic $m$-gon, whose side lengths are $1,2,\dots, 2^{m-1}$ in some order?

Stromquist confirmed his conjecture by a computer search for $k\leq 6$. Using a SAT solver, we confirmed it for $k\leq 7$.

\begin{lemma}\label{lemma:small_cases}
Conjecture \ref{conj:stromquist} is true for $k\leq 7$.
\end{lemma}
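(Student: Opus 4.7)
The plan is to reduce Conjecture \ref{conj:stromquist} for each $k \in \{3,4,5,6,7\}$ to a propositional unsatisfiability question and decide it with a SAT solver. Set $N = 2^k - 1$ and introduce Boolean variables $x_0, x_1, \dots, x_{N-1}$, interpreting $x_i = 1$ as ``vertex $i$ of the inscribed regular $N$-gon is red'' and $x_i = 0$ as ``blue.'' Enumerate the set $\mathcal{C}$ of all $k$-subsets of $\mathbb{Z}_N$ whose cyclically consecutive gaps form some permutation of $(1, 2, 4, \dots, 2^{k-1})$: iterate over the $(k-1)!$ cyclic arrangements of the arc lengths and the $N$ rotations, and for each compute the subset $\{j,\ j+g_1,\ j+g_1+g_2,\ \dots\} \pmod{N}$. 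For each $S = \{v_1, \dots, v_k\} \in \mathcal{C}$, add the two width-$k$ clauses
\[
x_{v_1} \vee \cdots \vee x_{v_k} \qquad \text{and} \qquad \neg x_{v_1} \vee \cdots \vee \neg x_{v_k},
\]
forbidding $S$ from being all-blue and all-red, respectively. Satisfying assignments of the resulting CNF are exactly the $2$-colourings with no monochromatic copy of the $(k,2)$-power, so the lemma for this $k$ is equivalent to the formula being unsatisfiable.

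The largest instance is $k = 7$, with $N = 127$, $|\mathcal{C}| \le N \cdot (k-1)! = 91{,}440$, and at most about $183{,}000$ clauses of width $7$ on $127$ variables: a size comfortably within the reach of modern SAT solvers such as Kissat or Glucose. I would run such a solver on each of the five instances; each should return UNSAT. To insulate the proof from any bug in the solver, I would emit a DRAT proof of unsatisfiability for each instance and verify it with an independent checker such as DRAT-trim. As an additional sanity check, for $k \le 5$ (where $N \le 31$) one can brute-force enumerate all colourings modulo rotation, reflection, and colour swap, which is easily feasible and confirms the SAT output; this also reproduces Stromquist's earlier cases independently.

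The main obstacle is \emph{not} solver runtime but ensuring the encoding is correct: an off-by-one error in the modular arithmetic, or a mistake in the enumeration of cyclic arrangements, could make the instance UNSAT for spurious reasons. To guard against this I would generate $\mathcal{C}$ by two independent procedures — a permutation-with-prefix-sum method, and a recursive enumeration choosing successive arcs — and check that both yield the same list of subsets, of the expected cardinality $N \cdot (k-1)!$. I would also test the pipeline on artificial near-miss instances (e.g.\ $k=3$ with arc ratios $1{:}1{:}5$, where an explicit bad colouring is easy to construct) to confirm that a SAT answer with a model is returned in that case. With the encoding cross-validated and the DRAT proofs independently checked, Lemma \ref{lemma:small_cases} follows from the SAT solver outputs.
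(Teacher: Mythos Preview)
Your proposal is correct and follows essentially the same approach as the paper: encode each colouring as Boolean variables, add for every permuted copy of the $(k,2)$-power the two width-$k$ clauses forbidding it from being monochromatic, and certify unsatisfiability with a SAT solver (the paper uses Glucose3 and reports about one hour for $k=7$). Your additional safeguards (DRAT certificates, independent enumeration of $\mathcal{C}$, brute-force cross-checks for small $k$) go beyond what the paper records but do not change the method.
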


\begin{proof}
Let $P_k$ denote the set of all permuted copies of the $(k,2)$-power in a fixed regular $(2^{k}-1)$-gon inscribed in $S$. For each vertex $p$ of the $(2^{k}-1)$-gon, let $x_p$ be a boolean variable. We think of $x_p$ as the indicator variable of $p$ being red. For each $k$, the problem is equivalent to solving the sat-formula

\[\big(\bigwedge\limits_{Q\in P_k} \bigvee\limits_{p\in Q} x_p \big) \wedge \big(\bigwedge\limits_{Q\in P_k} \bigvee\limits_{p\in Q} \neg x_p \big).\]
    
The first half of the formula expresses that there are no monochromatic blue $k$-gons, and the second half expresses that there are no red ones. If the formula is unsatisfiable for each $k$, then the conjecture holds. The SAT solver Glucose3 solved the $k=7$ case in approximately one hour. The formulas for $k\le 7$ in the DIMACS CNF format can be found at \cite{StromData}.
\end{proof}

We also run the $k=8$ case for approximately 100 days, but the solver did not come to a conclusion. Note that the complexity of the problem increases very rapidly as $k$ grows. For too large $k$, checking whether a single colouring is a counterexample, takes too long.

We mention that if Conjecture \ref{conj:stromquist} holds, then it would be also interesting to determine the minimal number of monochromatic permuted copies in a two-colouring of a regular $2^k-1$-gon. A small step in this direction is the following.

\begin{proposition}
    For any $\underline{d}$ with $k\geq 3$ and any two-colouring of a regular $2^k-1$-gon the number of monochromatic permuted copies of $\underline{d}$ is even.
\end{proposition}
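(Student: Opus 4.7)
I propose to reformulate the statement as a polynomial identity modulo $2$. Let $N = 2^k - 1$ and identify the vertices of the regular $N$-gon with $\mathbb{Z}_N$. Encoding a two-colouring by $x \in \{0,1\}^N$, the indicator that a permuted copy $P$ is monochromatic equals $\prod_{v \in P} x_v + \prod_{v \in P}(1-x_v)$. Working modulo $2$, the second product becomes $\prod_{v \in P}(1+x_v) = \sum_{T \subseteq P} \prod_{v \in T} x_v$, so the indicator collapses to $\sum_{T \subsetneq P} \prod_{v \in T} x_v$. Summing over all permuted copies gives
\[
\sum_{P} [P \text{ monochromatic}] \equiv \sum_{T \,:\, |T| < k} c(T) \prod_{v \in T} x_v \pmod{2},
\]
where $c(T) := |\{P : T \subseteq P\}|$ is the number of permuted copies of $\underline{d}$ containing $T$. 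Since the monomials $\prod_{v \in T} x_v$ are $\mathbb{F}_2$-linearly independent as Boolean functions, the proposition reduces to showing that $c(T) \equiv 0 \pmod 2$ for every $T \subseteq \mathbb{Z}_N$ with $|T| < k$.

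For $T = \emptyset$ I would simply count: choose a starting vertex together with an ordering of the gaps $g_i := N d_i$, and divide by the $k$ cyclic rotations, giving $c(\emptyset) = N(k-1)!$ when the $d_i$ are distinct. Since $k \geq 3$, the factor $(k-1)!$ is even, so $c(\emptyset)$ is even. For $1 \leq |T| = j < k$, write $T = \{t_1, \ldots, t_j\}$ in cyclic order with arc gaps $e_1, \ldots, e_j$. Any permuted copy $P \supseteq T$ corresponds uniquely to a tuple of ordered compositions $(C_1, \ldots, C_j)$, where $C_i$ is a composition of $e_i$ into positive parts, and the multiset union of parts across all the $C_i$ equals $\{g_1, \ldots, g_k\}$. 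Because $\sum_i |C_i| = k > j$, at least one $|C_i| \geq 2$; let $i^\star$ be the smallest such index and define $\phi$ to reverse $C_{i^\star}$. The map $\phi$ preserves $i^\star$, the multiset of parts in each $C_i$, and each arc-sum $e_i$, and satisfies $\phi^2 = \mathrm{id}$. A fixed point would force the first and last entries of $C_{i^\star}$ to coincide, which is impossible because the $g_i$ are distinct; hence $\phi$ is a fixed-point-free involution and $c(T)$ is even.

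The main (and essentially only) subtlety is that the argument relies on the $d_i$ being distinct, in order to rule out palindromic fixed points of $\phi$; indeed, already for $\underline{d}=(3/7,2/7,2/7)$ with the all-red colouring of the heptagon one sees $7$ monochromatic copies, so some such hypothesis is genuinely needed. Distinctness is automatic for the $(k,2)$-power of Stromquist's conjecture, which is the intended setting of the proposition. Under this hypothesis, combining the count at $T = \emptyset$ with the involution for $|T| \geq 1$ gives $c(T) \equiv 0 \pmod 2$ for every $T$ with $|T| < k$, and the polynomial identity above then yields the even-parity conclusion for an arbitrary two-colouring.
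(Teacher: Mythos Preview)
Your proof is correct and takes essentially the same approach as the paper: both reduce (via inclusion--exclusion, equivalently expanding the monochromaticity indicator modulo $2$) to showing that the number $c(T)$ of permuted copies containing a fixed set $T$ with $|T|<k$ is even, and both deduce this from the fact that in any such extension some arc must receive at least two of the gaps $g_i$ (the paper invokes the resulting factorial factor, you give the equivalent reversal involution). Your caveat about distinctness of the $d_i$ is well taken---the paper's count $(2^k-1)(k-1)!$ and ``product of factorials'' claim already tacitly assume it, and your example $(3/7,2/7,2/7)$ shows the statement genuinely fails otherwise.
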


\begin{proof}
Suppose that there are $t$ blue vertices in the colouring, and denote them by $p_1,p_2,\dots,p_t$. Let $A_i$ denote the set of those copies of $\underline{d}$ that contain $p_i$. Then, the number of red monochromatic copies of $\underline{d}$ is 

\[(2^{k}-1)\cdot(k-1)!-\big|\bigcup\limits_{i\in [t]} A_i \big|=
(2^{k}-1)\cdot(k-1)!-\sum\limits_{1\le i_1\le \dots \le i_\ell\le t}\left |(-1)^{\ell+1}A_{i_1}\cap\dots\cap A_{i_\ell}\right |.\]

In the right-hand side of the expression above, each term $|A_{i_1}\cap \dots \cap A_{i_\ell}|$ is a product of factorials depending on the differences of the $i_j$ values. Therefore, each term is even, except if $\ell=k$ and $(p_{i_1},\dots,p_{i_\ell})$ is a blue copy of $\underline{d}$.
Hence, the parity of red and blue monochromatic permuted copies is the same. 
\end{proof}

\subsection{Algorithm for checking colourings}

Suppose we are given a colouring of the vertices of  a regular $n$-gon, and we want to decide whether there is monochromatic $k$-gon with side lengths $d_1,\dots, d_k$. We can trivially answer this in $O(n\cdot k!)$ time, as there are at most $n\cdot (k-1)!$ possible $k$-gons that have to be checked, and for each we have to go through at most $k$ points. For  $(k,2)$-powers this gives a $O(2^kk!)$ running time.

Interestingly, sometimes we can do better. Suppose we are given distances $d_1,\dots,d_k$, such that the subsets of them have pairwise different sums. For example this holds for $(k,2)$-powers. In these cases we can use a standard dynamic programming strategy to speed up the calculation. We explain here how to check for red monochromatic copies, same works for the blue ones. 

We take all $(i,j)$ pairs with $1\le i, j \le n$, and for each we calculate if there is a monochromatic red path from $i$ to $j$ going counterclockwise using distinct side lengths from $d_1,\dots,d_k$. We do these calculations in a clever order, so that we can use the previously computed values. 

To achieve this, first note that if a number $\ell$ can be written as a sum of some $d_i$-s, using each $d_i$ at most once, then this sum is unique. Let $b_\ell$ denote the number of elements in this unique sum, and let $b_\ell$ be $0$ if there is no such sum. We calculate $b_\ell$ in advance for each $\ell\in 1,\dots,n$ and we store these values. We also store for each $\ell$ the subset which gives this sum; let $S_\ell$ denote this set.    

Order the $(i,j)$ pairs in (non-strictly) increasing order based on $b_{j-i}$, where subtraction is modulo $n$. Using the precomputed $b_{\ell}$ values, the ordering can be generated in  $O(n^2)$ time. Then, consider the pairs one by one in this order.

If $b_{j-i}=0$, then there is no path using distinct lengths from $d_1,\dots,d_k$.

If $b_{j-i}=1$, then we simply check the colour of $i$ and $j$. 

If $b_{j-i}>1$, then it is enough to check that $j$ is red and whether there is a $d\in S_{j-i}$ so that  there is a monochromatic red path from $i$ to $j-d$. These values are all known because $b_{j-i-d}=b_{j-i}-1<b_{j-i}$. Since $|S_{j-i}|\le k$, for each $(i,j)$ pair we can decide in at most $k$ steps whether there is a monochromatic path from $i$ to $j$. Hence, the algorithm runs in $O(n^2k)$ steps. 

For the powers of two, this gives running time $O(2^{2k}k)$, which is faster than $O(2^k\cdot k!)$.

\section{Concluding Remarks}
If Conjecture \ref{conj:stromquist} holds, a natural question would be whether we can strengthen it by only allowing certain permutations of the distances, i.e., the following way. Let $(d_1,\dots,d_k)$ be a $k$-tuple and let $\Pi$ denote a family of permutations of $1,\dots,k$. Is it true that in every two-colouring of the circle of unit perimeter, there is a monochromatic $k$-tuple of points in which the distances of cyclically consecutive points, measured along the arcs, are $d_{\sigma(1)}, d_{\sigma(2)}, \dots, d_{\sigma(k)}$ in this order for some $\sigma\in \Pi$? For example, if $\Pi$ contains a single permutation, this would mean that we are looking for a monochromatic rotated copy of a fixed $k$-gon. Note that Theorem \ref{thm:main} settles most of these questions. Clearly, the distances have to be the $(k,2)$-power. Furthermore, if $i$ and $j$ are neighbours in every permutation in $\Pi$, then the statement would imply that we can always find a monochromatic $k$-gon with side lengths $(\{d_1\dots,d_k\}\setminus\{d_i,d_j\})\cup\{d_i+d_j\}$ in some order. If $k\ge4$, this contradicts Theorem \ref{thm:main}. In particular, a single permutation is not enough for $k\ge4$.

\section*{Acknowledgments}
    The authors express their gratitude to an anonymous referee for many valuable comments and, in particular, for pointing out a gap in the original proof of Theorem 1.5. We also thank Walter Stromquist for calling our attention to the problem addressed in our paper and for sharing his findings with us.

\bibliographystyle{plain}
\bibliography{biblio}

\end{document}